\theoremstyle{plain}
\newtheorem{thm}{Theorem}[section]
\newtheorem*{thm*}{Theorem}
\newtheorem*{cor*}{Corollary}
\newtheorem{prop}[thm]{Proposition}
\newtheorem{lem}[thm]{Lemma}
\newtheorem{cor}[thm]{Corollary}
\newtheorem*{claim*}{Claim}
\theoremstyle{definition}
\newtheorem{ex}[thm]{Example}
\newtheorem{rem}[thm]{Remark}
\newtheorem{notation}[thm]{Notation}
\newtheorem{setup}[thm]{Setup}
\theoremstyle{remark}
\newtheorem{step}{Step}
\numberwithin{equation}{thm}
\def\Z{\mathbb{Z}}
\def\pd{\operatorname{pd}}
\def\Ext{\operatorname{Ext}}
\def\Ker{\operatorname{Ker}}
\def\Hom{\operatorname{Hom}}
\def\Mod{\mathrm{Mod}}
\def\rank{\mathrm{rank}}
\def\a{\mathrm a}
\def\e{\mathrm{e}}
\def\m{\mathfrak m}
\def\n{\mathfrak n}
\def\q{\mathfrak q}
\def\H{\mathrm{H}}
\newcommand{\rmK}{\mathrm{K}}
\newcommand{\calR}{\mathcal{R}}
\newcommand{\calX}{\mathcal{X}}
\newcommand{\fka}{\mathfrak{a}}
\newcommand{\fkb}{\mathfrak{b}}
\newcommand{\fkc}{\mathfrak{c}}
\newcommand{\fkm}{\mathfrak{m}}
\newcommand{\fkn}{\mathfrak{n}}
\newcommand{\fkp}{\mathfrak{p}}
\newcommand{\fkM}{\mathfrak{M}}
\newcommand{\mapright}[1]{%
\smash{\mathop{%
\hbox to 1cm{\rightarrowfill}}\limits^{#1}}}
\newcommand{\mapleft}[1]{%
\smash{\mathop{%
\hbox to 1cm{\leftarrowfill}}\limits_{#1}}}
\def\ann{\operatorname{Ann}}
\def\Spec{\operatorname{Spec}}
\def\Syz{\mathrm{Syz}}
\def\gr{\mbox{\rm gr}}
\def\Y{{\mathcal Y}}
\def\ol{\overline}
\def\rhom{\operatorname{\mathbf{R}Hom}}
\def\gdim{\operatorname{Gdim}}
\begin{document}
\setlength{\baselineskip}{15pt}
\title{Ulrich ideals and almost Gorenstein rings}
\author{Shiro Goto}
\address{Department of Mathematics, School of Science and Technology, Meiji University, 1-1-1 Higashi-mita, Tama-ku, Kawasaki 214-8571, Japan}
\email{goto@math.meiji.ac.jp}
\author{Ryo Takahashi}
\address{Graduate School of Mathematics, Nagoya University, Furocho, Chikusa-ku, Nagoya 464-8602, Japan}
\urladdr{http://www.math.nagoya-u.ac.jp/~takahashi/}
\email{takahashi@math.nagoya-u.ac.jp}
\author{Naoki Taniguchi}
\address{Department of Mathematics, School of Science and Technology, Meiji University, 1-1-1 Higashi-mita, Tama-ku, Kawasaki 214-8571, Japan}
\email{taniguti@math.meiji.ac.jp}
\thanks{2010 {\em Mathematics Subject Classification.} 13H10, 13H15, 13D07}
\thanks{{\em Key words and phrases.} almost Gorenstein ring, Cohen--Macaulay ring, Ulrich ideal}
\thanks{The first author was partially supported by JSPS Grant-in-Aid for Scientific Research 25400051. The second author was partially supported by JSPS Grant-in-Aid for Scientific Research 25400038. The third author was partially supported by Grant-in-Aid for JSPS Fellows 26-126 and by JSPS Research Fellow. }
\begin{abstract} 
The structure of the complex $\rhom_R(R/I,R)$ is explored for an Ulrich ideal $I$ in a Cohen--Macaulay local ring $R$.
As a consequence, it is proved that in a one-dimensional almost Gorenstein but non-Gorenstein local ring, the only possible Ulrich ideal is the maximal ideal.
It is also studied when Ulrich ideals have the same minimal number of generators.
\end{abstract}
\maketitle
\section{Introduction}
This paper studies Ulrich ideals of Cohen--Macaulay local rings and almost Gorenstein local rings.

Ulrich ideals are newcomers.
They were introduced by \cite{GOTWY1} in 2014.
Typical examples of Ulrich ideals are the maximal ideal of a Cohen--Macaulay local ring with minimal multiplicity.
The syzygy modules of Ulrich ideals are known to be well-behaved \cite {GOTWY1}.
We refer the reader to \cite{GOTWY1} for a basic theory of Ulrich ideals and \cite{GOTWY2} for the results about the ubiquity of Ulrich ideals of two-dimensional rational singularities and the representation-theoretic aspects of Ulrich ideals.

Almost Gorenstein rings are also newcomers.
They form a class of Cohen--Macaulay rings, which are not necessarily Gorenstein but still good, hopefully next to the Gorenstein rings.
The notion of almost Gorenstein local rings dates back to the article \cite{BF} of Barucci and Fr\"oberg in 1997.
They introduced almost Gorenstein rings in the case where the local rings are of dimension one and analytically unramified.
We refer the reader to \cite{BF} for a well-developed theory of almost symmetric numerical semigroups.
The notion of almost Gorenstein local rings in the present  paper is, however, based on the definition given by the authors \cite{GTT} in 2015 for Cohen--Macaulay local rings of arbitrary  dimension.
See \cite{GMP} for a basic theory of almost Gorenstein local rings of dimension one which might be analytically ramified.

One of the purposes of this paper is to clarify the structure of Ulrich ideals of almost Gorenstein local rings.
The motivation for the research comes from a recent result of Kei-ichi Watanabe, which asserts that non-Gorenstein almost Gorenstein numerical semigroup rings possess no Ulrich monomial ideals except the maximal ideal.
This result essentially says that there should be some restriction of the distribution of Ulrich ideals of an almost Gorenstein but non-Gorenstein local ring.
Our research started from the attempt to understand this phenomenon.
Along the way, we recognized that his result holds true for every one-dimensional almost Gorenstein non-Gorenstein local ring, and finally reached new knowledge about the behavior of Ulrich ideals, which is reported in this paper.

Let us state the results of this paper, explaining how this paper is organized.
In Section \ref{rhom} we shall prove the following structure theorem of the complex $\rhom_R(R/I,R)$ for an Ulrich ideal $I$.

\begin{thm}
Let $R$ be a Cohen--Macaulay local ring of dimension $d\ge0$.
Let $I$ be a non-parameter Ulrich ideal of $R$ containing a parameter ideal of $R$ as a reduction.
Denote by $\nu(I)$ the minimal number of generators of $I$, and put $t=\nu(I)-d$.
Then there is an isomorphism
$$
\rhom_R(R/I,R)\cong\bigoplus_{i\in\Z}(R/I)^{\oplus u_i}[-i]
$$
in the derived category of $R$, where
$$
u_i=
\begin{cases}
0 & (i<d),\\
t & (i=d),\\
(t^2-1)t^{i-d-1} & (i>d).
\end{cases}
$$
In particular, one has $\Ext_R^i(R/I,R)\cong(R/I)^{\oplus u_i}$ for each integer $i$.
\end{thm}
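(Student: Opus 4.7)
The plan is to reduce to the Artinian local ring $S := R/Q$ by change of rings, compute the Ext modules from an explicit minimal free resolution, and then upgrade the Ext computation to a derived-category splitting using freeness over $T := R/I$.

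\textbf{Step 1 (Reduction).} Since $R$ is Cohen--Macaulay and $Q$ is a parameter reduction of $I$, the ideal $Q$ is generated by an $R$-regular sequence of length $d$, so the Koszul complex yields $\rhom_R(R/Q,R) \cong (R/Q)[-d]$. The standard derived change-of-rings adjunction then gives
\[
\rhom_R(R/I,R) \;\cong\; \rhom_{R/Q}\!\bigl(R/I,\,\rhom_R(R/Q,R)\bigr) \;\cong\; \rhom_S(T,S)[-d].
\]
Setting $\bar I := I/Q$, the Ulrich hypothesis (see \cite{GOTWY1}) provides $\bar I^{2} = 0$ in $S$ together with an $S$-linear isomorphism $\bar I \cong T^{t}$. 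It therefore suffices to prove the analogous decomposition of $\rhom_S(T,S)$ into shifts of $T$.

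\textbf{Step 2 (Resolution and Ext computation).} Using $\bar I \cong T^{t}$ and $\bar I^{2}=0$, an easy induction shows the $n$-th $S$-syzygy of $T$ is $\Omega^{n}_{S}T \cong T^{t^{n}}$, so the minimal free resolution has $F_{n} = S^{t^{n}}$, with differentials given by multiplication by the generators of $\bar I$. Let $K^{\bullet} := \Hom_{S}(F_{\bullet},S)$. A direct inspection gives $\Ker(d^{n}) = \bar I\cdot K^{n}$, so each $\Ker(d^{n})$ is annihilated by $\bar I$ and in particular is a $T$-module. Applying $\Ext_S^{\bullet}(-,S)$ to $0\to T^{t}\to S\to T\to 0$ yields $\Hom_S(T,S) = \ann_S(\bar I) = \bar I \cong T^{t}$ and, for $n\ge 1$, $\Ext_S^{n+1}(T,S) \cong \bigl(\Ext_S^{n}(T,S)\bigr)^{t}$. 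The degree-one Ext is extracted from
\[
0 \to T^{t} \to S \to T^{t^{2}} \to \Ext_S^{1}(T,S) \to 0;
\]
the middle map sends $s\mapsto \bar s\cdot\sum_{i=1}^{t}e_{i,i}$, and since the ``diagonal'' element $\sum_{i}e_{i,i}\in T^{t^{2}}$ has $1$ as its $(1,1)$-coordinate, the cyclic submodule it generates splits off as a direct summand of $T^{t^{2}}$, making $\Ext_S^{1}(T,S) \cong T^{t^{2}-1}$ a free $T$-module. Iterating the recursion gives $\Ext_S^{n}(T,S) \cong T^{(t^{2}-1)t^{n-1}}$ for all $n\ge 1$, which after the shift from Step~1 matches the claimed values of $u_i$.

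\textbf{Step 3 (Splitting in the derived category).} Because each $\Ext_S^{n}(T,S)$ is a free $T$-module and $\Ker(d^{n})$ is a $T$-module, the surjection $\Ker(d^{n}) \twoheadrightarrow H^{n}(K^{\bullet})$ admits a $T$-linear (hence $S$-linear) section $\sigma_{n}$. These sections assemble into an $S$-linear chain map
\[
\sigma\colon \bigoplus_{n\ge 0}\Ext_S^{n}(T,S)[-n] \longrightarrow K^{\bullet}
\]
which induces the identity on cohomology and is therefore a quasi-isomorphism in $D(S)\subset D(R)$. Shifting by $[-d]$ and combining with Step~1 yields the desired isomorphism in $D(R)$. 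The hardest point is the identification of $\Ext_S^{1}(T,S)$ in Step~2: one must recognize the image of $S\to T^{t^{2}}$ as the cyclic $T$-submodule generated by a diagonal element whose coefficient vector has a unit entry, and then use this to exhibit the cokernel as a free $T$-module of rank $t^{2}-1$; once this base-case freeness is in hand, both the recursive computation of the higher Ext modules and the splitting of the full complex follow formally.
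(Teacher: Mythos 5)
Your argument is correct, and it reaches the theorem by a route that is organized differently from the paper's, although the two share the same underlying facts about Ulrich ideals. The paper works over $R$ throughout: it computes $\Ext_R^d$ and $\Ext_R^{d+1}$ by dualizing $0\to I/Q\to R/Q\to R/I\to0$ (using $I/Q\cong(R/I)^{\oplus t}$ and $Q:I=I$ from \cite{GOTWY1}), deduces freeness of $\Ext_R^{d+1}(R/I,R)$ from the fact that a module of finite projective dimension over the Artinian ring $R/I$ is free, obtains the higher Ext modules from the syzygy periodicity $\Syz_R^i(R/I)\cong\Syz_R^d(R/I)^{\oplus t^{i-d}}$ of \cite[Corollary 7.4]{GOTWY1}, and finally invokes \cite[Lemma 3.1]{ddc} to split the complex. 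You instead perform the derived reduction $\rhom_R(R/I,R)\cong\rhom_{R/Q}(R/I,R/Q)[-d]$ once and for all (the paper uses the same adjunction, but only later and only in degree $d$), then work entirely over the Artinian ring $S=R/Q$: you build the minimal free resolution of $T=R/I$ by hand from $\bar I\cong T^{\oplus t}$ and $\bar I^{\,2}=0$, identify $\Ext_S^1(T,S)$ as the quotient of $T^{\oplus t^2}$ by the unimodular diagonal element, and split the dual complex by choosing $T$-linear sections. What your version buys is self-containedness — the syzygy statement and the splitting lemma both drop out of your explicit description of $K^\bullet$, in particular from $\Ker(d^n)=\bar I K^n$; what the paper's version buys is brevity at the key freeness step, where it simply observes that the cokernel in $0\to T\to T^{\oplus t^2}\to\Ext^{d+1}\to0$ has finite projective dimension over an Artinian local ring and hence is free, with no need to exhibit a complement. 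One small point deserves a sentence in your write-up: the equality $\ann_S(\bar I)=\bar I$ (the avatar of $Q:I=I$) underlies both your identification $\Hom_S(T,S)=\bar I$ and the computation $\Ker(d^n)=\bar I K^n$; it does follow from your hypotheses, since $\bar I\subseteq\ann_S(\bar I)$ because $\bar I^{\,2}=0$ while $\ann_S(\bar I)=\ann_S(T^{\oplus t})=\ann_S(T)=\bar I$ because $t\ge1$, but as written it reads as an unexplained identification rather than a consequence of $\bar I\cong T^{\oplus t}$.
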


This theorem actually yields a lot of consequences and applications.
Let us state some of them.
The Bass numbers of $R$ are described in terms of those of $R/I$ and the $u_i$, which recovers a result in \cite{GOTWY1}.
Finiteness of the G-dimension of $I$ is characterized in terms of $\nu(I)$, which implies that if $R$ is {\em G-regular} in the sense of \cite{greg} (e.g., $R$ is a non-Gorenstein ring with minimal multiplicity, or is a non-Gorenstein almost Gorenstein ring), then one must have $\nu(I)\ge d+2$.
For a non-Gorenstein almost Gorenstein ring with prime Cohen--Macaulay type, all the Ulrich ideals have the same minimal number of generators.
For every one-dimensional non-Gorenstein almost Gorenstein local ring the only non-parameter Ulrich ideal is the maximal ideal.
This recovers the result of Watanabe mentioned above, and thus our original aim of the research stated above is achieved.

Now we naturally get interested in whether or not the minimal numbers of generators of Ulrich ideals of an almost Gorenstein non-Gorenstein local ring are always constant.
We will explore this in Section \ref{const} to obtain some supporting evidence for the affirmativity.
By the way, it turns out to be no longer true if the base local ring is not almost Gorenstein.
In Section \ref{method} we will give a method of constructing Ulrich ideals which possesses different numbers of generators.

\begin{notation}
In what follows, unless otherwise specified, $R$ stands for a $d$-dimensional Cohen--Macaulay local ring with maximal ideal $\fkm$ and residue field $k$.
For a finitely generated $R$-module $M$, denote by $\ell_R(M)$, $\nu_R(M)$, $r_R(M)$ and $\e_\m^0(M)$ the length of $M$, the minimal number of generators of $M$, the Cohen--Macaulay type of $M$ and the multiplicity of $M$ with respect to $\fkm$.
Let $v(R)$ denote the embedding dimension of $R$, i.e., $v(R)=\nu_R(\m)$.
For each integer $i$ we denote by $\mu_i(R)$ the $i$-th Bass number of $R$, namely, $\mu_i(R)=\dim_k\Ext_R^i(k,R)$.
Note that $\mu_d(R)=r(R)$.
The subscript indicating the base ring is often omitted.
\end{notation}

\section{The structure of $\rhom_R(R/I,R)$ for an Ulrich ideal $I$}\label{rhom}

In this section, we establish a structure theorem of $\rhom_R(R/I,R)$ for an Ulrich ideal $I$ of a Cohen--Macaulay local ring, and derive from it a lot of consequences and applications.
First of all, we fix our notation and assumptions on which all the results in this section are based.

\begin{setup}
Throughout this section, let $I$ be a non-parameter $\m$-primary ideal of $R$ containing a parameter ideal $Q$ of $R$ as a reduction.
Suppose that $I$ is an {\em Ulrich ideal}, that is, $I^2=QI$ and $I/I^2$ is $R/I$-free.
Put $t=\nu(I)-d>0$ and
$$
u_i=
\begin{cases}
0 & (i<d),\\
t & (i=d),\\
(t^2-1)t^{i-d-1} & (i>d).
\end{cases}
$$
\end{setup}

\begin{rem}
(1) The condition that $I$ contains a parameter ideal $Q$ of $R$ as a reduction is automatically satisfied if $k$ is infinite.\\
(2) The condition $I^2=QI$ is independent of the choice of minimal reductions $Q$ of $I$.
\end{rem}

The following is the main result of this section.

\begin{thm}\label{main}
There is an isomorphism
$$
\rhom_R(R/I,R)\cong\bigoplus_{i\in\Z}(R/I)^{\oplus u_i}[-i]
$$
in the derived category of $R$.
Hence for each integer $i$ one has an isomorphism
$$
\Ext_R^i(R/I,R)\cong(R/I)^{\oplus u_i}
$$
of $R$-modules.
In particular, $\Ext_R^i(R/I,R)$ is a free $R/I$-module.
\end{thm}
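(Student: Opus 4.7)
The plan is to reduce the problem to the Artinian local ring $S = R/Q$, where the Ulrich structure becomes very explicit. Since $Q$ is generated by a regular sequence of length $d$, the Koszul resolution gives $\rhom_R(R/Q, R) \cong (R/Q)[-d]$; combining this with the Hom-restriction adjunction $\rhom_R(-, R) \cong \rhom_S(-, \rhom_R(S, R))$ on $S$-modules yields
\[
\rhom_R(R/I, R) \;\cong\; \rhom_S(R/I, S)[-d].
\]
So it suffices to show $\rhom_S(A, S) \cong \bigoplus_{n \geq 0} A^{u_{n+d}}[-n]$ in $\D(S)$, where $A = R/I = S/J$ and $J = I/Q$. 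The Ulrich conditions $I^2 = QI$ and the $R/I$-freeness of $I/I^2$ translate here into $J^2 = 0$ and $J \cong A^t$ as $S$-modules, with $t$ generators $y_1, \dots, y_t$ forming an $A$-basis. In particular $\ann_S(J) = J$, and the minimal $S$-free resolution of $A$ has the form $F_n = S^{t^n}$, each differential being a block-diagonal matrix built from copies of the row $(y_1, \dots, y_t)$; in particular every differential has image inside $J \cdot F_{n-1}$.

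Dualizing the resolution yields a complex $C^\bullet$ with $C^n = S^{t^n}$ whose differentials $d^n$ land in $J \cdot C^{n+1}$. Applying $\rhom_S(-, S)$ to $0 \to J \to S \to A \to 0$ produces the triangle $\rhom_S(A, S) \to S \to \rhom_S(A, S)^t \to \rhom_S(A, S)[1]$ (using $J \cong A^t$), whose long exact sequence gives $\Ext^0_S(A, S) = \ann_S(J) = J \cong A^t$, the recursion $\Ext^{n+1}_S(A, S) \cong \Ext^n_S(A, S)^t$ for $n \geq 1$, and for the first step the four-term exact sequence
\[
0 \to A^t \to S \to A^{t^2} \to \Ext^1_S(A, S) \to 0.
\]
The middle map is $s \mapsto \bar s \cdot \Delta$, where $\Delta = (e_1, \dots, e_t) \in (A^t)^t = A^{t^2}$ is the ``diagonal'' element; projecting $A^{t^2}$ onto a matching coordinate exhibits $A \Delta$ as a free rank-one $A$-summand of $A^{t^2}$, so $\Ext^1_S(A, S) \cong A^{t^2 - 1}$. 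Iterating the recursion yields $\Ext^n_S(A, S) \cong A^{u_{n+d}}$, each free as an $A$-module of the prescribed rank.

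It remains to upgrade this cohomological computation to a derived-category splitting. Since each $d^n$ has image in $J \cdot C^{n+1}$ and $J^2 = 0$, the cocycle module $Z^n = \ker(d^n)$ satisfies $J \cdot Z^n = 0$, so $Z^n$, $B^n = \mathrm{im}(d^{n-1})$, and $H^n = Z^n/B^n$ are all $A$-modules. Freeness of $H^n$ over the local Artinian ring $A$ ensures that the surjection $Z^n \twoheadrightarrow H^n$ of $A$-modules splits, giving an $A$-linear (equivalently $S$-linear) section $\sigma_n \colon H^n \hookrightarrow Z^n \subset C^n$. Assembling the $\sigma_n$'s produces a morphism of $S$-complexes $\bigoplus_n H^n[-n] \to C^\bullet$ (the source having zero differentials) which induces the identity on cohomology and is therefore a quasi-isomorphism; shifting by $[-d]$ and restricting scalars then gives the claimed isomorphism in $\D(R)$. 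I expect the main obstacle to be the freeness of $\Ext^1_S(A, S)$: establishing it requires the explicit identification of the middle map of the four-term sequence and the splitting of the diagonal element $\Delta$, after which the formality argument of this paragraph propagates routinely to all higher degrees via the recursion.
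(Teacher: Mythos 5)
Your argument is correct, and it reaches the theorem by a genuinely different and more self-contained route than the paper's. The paper stays over $R$: it computes $\Ext_R^d$ and $\Ext_R^{d+1}$ from the sequence $0\to I/Q\to R/Q\to R/I\to 0$ together with the facts $I/Q\cong(R/I)^{\oplus t}$ and $Q:I=I$ imported from \cite{GOTWY1}, deduces freeness of $\Ext_R^{d+1}(R/I,R)$ from the resulting presentation $0\to R/I\to(R/I)^{\oplus t^2}\to\Ext_R^{d+1}(R/I,R)\to0$ (finite projective dimension over the Artinian ring $R/I$ forces freeness), obtains all higher $\Ext$'s from the syzygy periodicity $\Syz_R^i(R/I)\cong\Syz_R^d(R/I)^{\oplus t^{i-d}}$ of \cite[Corollary 7.4]{GOTWY1}, and finally gets the derived-category splitting by applying the general formality lemma \cite[Lemma 3.1]{ddc} to $\Hom_R(R/I,E)$, a complex of $R/I$-modules with projective cohomologies. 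You instead pass to $S=R/Q$ by adjunction, rederive $\ann_S(J)=J$ and the shape of the minimal resolution directly from $J^2=0$ and $J\cong A^{\oplus t}$, replace the syzygy citation by the recursion $\Ext_S^{n+1}(A,S)\cong\Ext_S^n(A,S)^{\oplus t}$ coming from the long exact sequence, make the freeness of $\Ext^1$ explicit by splitting off the diagonal element $\Delta$, and prove formality by hand. Your version buys independence from the external inputs (Corollaries 2.6 and 7.4 of \cite{GOTWY1} and the formality lemma); the paper's buys brevity and a cleaner freeness argument that avoids identifying the middle map.

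One step of yours needs repair. The inference ``$d^n$ has image in $J\cdot C^{n+1}$ and $J^2=0$, hence $J\cdot Z^n=0$'' is not valid as stated: a zero differential satisfies the hypothesis but not the conclusion (what does follow from that hypothesis is only $J\cdot C^n\subseteq Z^n$). The conclusion you need is nevertheless true, and it follows from the explicit form of the dual complex you already established: since $d^n$ is block-diagonal with each block the column vector with entries $y_1,\dots,y_t$, one gets $Z^n=(0:_SJ)^{\oplus t^n}=J^{\oplus t^n}$, which is killed by $J$ because $J^2=0$. With that corrected justification the $A$-linear sections $\sigma_n$ exist and your formality argument goes through.
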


\begin{proof}
Let us first show that $\Ext_R^i(R/I,R)\cong(R/I)^{\oplus u_i}$ for each $i$.
We do it by making three steps.

\begin{step}\label{1}
As $I$ is an $\m$-primary ideal, $R/I$ has finite length as an $R$-module.
Hence we have $\Ext_R^{<d}(R/I,R)=0$.
\end{step}

\begin{step}\label{2}
There is a natural exact sequence $0 \to I/Q \xrightarrow{f} R/Q \xrightarrow{g} R/I \to 0$, which induces an exact sequence
$$
\begin{array}{cccccc}
&\Ext_R^d(R/I,R)& \longrightarrow &\Ext_R^d(R/Q,R)& \longrightarrow &\Ext_R^d(I/Q,R)\\
\longrightarrow &\Ext_R^{d+1}(R/I,R)& \longrightarrow &\Ext_R^{d+1}(R/Q,R).
\end{array}
$$
Since $Q$ is generated by an $R$-sequence, we have $\Ext_R^{d+1}(R/Q,R)=0$.
There is a commutative diagram
$$
\xymatrix{
& \Ext_R^d(R/I,R)\ar[r]\ar[d]^\cong & \Ext_R^d(R/Q,R)\ar[r]\ar[d]^\cong & \Ext_R^d(I/Q,R)\ar[d]^\cong \\
0\ar[r] & \Hom_{R/Q}(R/I,R/Q)\ar[r]^{g^*}\ar[d]^\cong & \Hom_{R/Q}(R/Q,R/Q)\ar[r]^{f^*}\ar[d]^\cong & \Hom_{R/Q}(I/Q,R/Q) \\
0\ar[r] & (Q:I)/Q\ar[r]^{h} & R/Q
}
$$
with exact rows, where the vertical maps are natural isomorphisms and $h$ is an inclusion map.
Thus we get an exact sequence
$$
0 \to (Q:I)/Q \xrightarrow{h} R/Q \to \Hom_{R/Q}(I/Q,R/Q) \to \Ext_R^{d+1}(R/I,R) \to 0.
$$
Note here that $I/Q\cong(R/I)^{\oplus t}$ and $Q:I=I$ hold; see \cite[Lemma 2.3 and Corollary 2.6]{GOTWY1}.
Hence $\Ext_R^d(R/I,R)\cong(Q:I)/Q=I/Q\cong(R/I)^{\oplus t}$.
We have isomorphisms $\Hom_{R/Q}(I/Q,R/Q)\cong\Hom_{R/Q}(R/I,R/Q)^{\oplus t}\cong(I/Q)^{\oplus t}\cong(R/I)^{\oplus t^2}$, and therefore we obtain an exact sequence
$$
0 \to R/I \to (R/I)^{\oplus t^2} \to \Ext_R^{d+1}(R/I,R) \to0.
$$
This exact sequence especially says that $\Ext_R^{d+1}(R/I,R)$ has finite projective dimension as an $R/I$-module.
Since $R/I$ is an Artinian ring, it must be free, and we see that $\Ext_R^{d+1}(R/I,R)\cong(R/I)^{\oplus t^2-1}$.
\end{step}

\begin{step}\label{3}
It follows from \cite[Corollary 7.4]{GOTWY1} that $\Syz_R^i(R/I)\cong\Syz_R^d(R/I)^{\oplus t^{i-d}}$ for each $i\ge d$.
Hence we have
\begin{align*}
\Ext_R^{i+1}(R/I,R)
&\cong\Ext_R^1(\Syz_R^i(R/I),R)\cong\Ext_R^1(\Syz_R^d(R/I)^{\oplus t^{i-d}},R)\\
&\cong\Ext_R^{d+1}(R/I,R)^{\oplus t^{i-d}}\cong(R/I)^{\oplus(t^2-1)t^{i-d}}
\end{align*}
for all $i\ge d$.
\end{step}

Combining the observations in Steps \ref{1}, \ref{2} and \ref{3} yields that $\Ext_R^i(R/I,R)\cong(R/I)^{\oplus u_i}$ for all $i\in\Z$.

Take an injective resolution $E$ of $R$.
Then note that $C:=\Hom_R(R/I,E)$ is a complex of $R/I$-modules with $\H^i(C)\cong\Ext_R^i(R/I,R)\cong(R/I)^{\oplus u_i}$ for every $i\in\Z$.
Hence each homology $\H^i(C)$ is a projective $R/I$-module.
Applying \cite[Lemma 3.1]{ddc} to the abelian category $\Mod R/I$, the category of (all) $R/I$-modules, we obtain isomorphisms $\rhom_R(R/I,R)\cong C\cong\bigoplus_{i\in\Z}\H^i(C)[-i]\cong\bigoplus_{i\in\Z}(R/I)^{\oplus u_i}[-i]$ in the derived category of $R/I$.
This completes the proof of the theorem.
\end{proof}

The remainder of this section is devoted to producing consequences and applications of the above theorem.
First, we investigate vanishing of Ext modules.

\begin{cor}\label{extbase}
Let $M$ be a (possibly infinitely generated) $R/I$-module.
There is an isomorphism
$$
\Ext_R^i(M,R)\cong\bigoplus_{j\in\Z}\Ext_{R/I}^{i-j}(M,R/I)^{\oplus u_j}
$$
for each integer $i$.
In particular, if $\Ext_R^{\gg0}(M,R)=0$, then $\Ext_{R/I}^{\gg0}(M,R/I)=0$.
\end{cor}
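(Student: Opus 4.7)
The plan is to combine Theorem \ref{main} with the standard derived change-of-rings adjunction. Viewing $M$ as an $R/I$-module and using that the forgetful functor $\Mod R/I\to\Mod R$ has $\Hom_R(R/I,-)$ as right adjoint, we have a natural isomorphism
$$
\rhom_R(M,R)\simeq\rhom_{R/I}(M,\rhom_R(R/I,R))
$$
in the derived category of $R/I$. This reduces the computation of $\Ext_R^i(M,R)$ to applying $\rhom_{R/I}(M,-)$ to the complex on the right.

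Next, I would substitute the isomorphism $\rhom_R(R/I,R)\cong\bigoplus_{j\in\Z}(R/I)^{\oplus u_j}[-j]$ provided by Theorem \ref{main}. Because each shifted summand $(R/I)^{\oplus u_j}[-j]$ is concentrated in the single cohomological degree $j$, and these degrees are pairwise distinct, this coproduct agrees term by term with the corresponding product of complexes; consequently $\rhom_{R/I}(M,-)$ distributes over it. Applying $\rhom_{R/I}(M,N[-j])\simeq\rhom_{R/I}(M,N)[-j]$, commuting $\Hom$ with the finite direct sum $(R/I)^{\oplus u_j}$, and passing to $i$-th cohomology gives
$$
\Ext_R^i(M,R)\cong\prod_{j\in\Z}\Ext_{R/I}^{i-j}(M,R/I)^{\oplus u_j}.
$$
For each fixed $i$ the factor is nonzero only when $d\le j\le i$, since $u_j=0$ for $j<d$ and Ext vanishes in negative degrees, so the product is finite and is exactly the direct sum asserted in the statement.

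For the ``in particular'' assertion, single out the $j=d$ term: its contribution $\Ext_{R/I}^{i-d}(M,R/I)^{\oplus t}$ is a direct summand of $\Ext_R^i(M,R)$, and since $t\ge1$ the vanishing $\Ext_R^i(M,R)=0$ for $i\gg0$ forces $\Ext_{R/I}^n(M,R/I)=0$ for $n\gg0$. The main delicate point throughout is justifying that $\rhom_{R/I}(M,-)$ commutes with the possibly infinite coproduct for an arbitrary $R/I$-module $M$; the observation that this coproduct coincides as a complex with the corresponding product, whose components in each fixed degree are finite direct sums of copies of $R/I$, is precisely what makes this step harmless.
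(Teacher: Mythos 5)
Your proposal is correct and follows essentially the same route as the paper: the adjunction $\rhom_R(M,R)\cong\rhom_{R/I}(M,\rhom_R(R/I,R))$ followed by substitution of the decomposition from Theorem \ref{main} and passage to cohomology, then isolating the $j=d$ summand for the final assertion. Your extra care in justifying why $\rhom_{R/I}(M,-)$ may be pushed past the infinite coproduct --- namely that the coproduct is degreewise finite, hence coincides with the corresponding product, with which $\rhom$ does commute --- addresses a step the paper leaves implicit, but it is a refinement of the same argument rather than a different one.
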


\begin{proof}
There are isomorphisms
$$
\rhom_R(M,R)\cong\rhom_{R/I}(M,\rhom_R(R/I,R))\cong\bigoplus_{j\in\Z}\rhom_{R/I}(M,R/I)^{\oplus u_j}[-j],
$$
where the first isomorphism holds by adjointness (see \cite[(A.4.21)]{C}) and the second one follows from Theorem \ref{main}.
Taking the $i$th homologies, we get an isomorphism $\Ext_R^i(M,R)\cong\bigoplus_{j\in\Z}\Ext_{R/I}^{i-j}(M,R/I)^{\oplus u_j}$ for all integers $i$.
Since $u_d=t>0$, the module $\Ext_{R/I}^{i-d}(M,R/I)$ is a direct summand of $\Ext_R^i(M,R)$.
Therefore, when $\Ext_R^{\gg0}(M,R)=0$, one has $\Ext_{R/I}^{\gg0}(M,R/I)=0$.
\end{proof}

Now we can calculate the Bass numbers of $R$ in terms of those of $R/I$.

\begin{thm}\label{bass}
There are equalities
$$
\mu_i(R)=\sum_{j\in\Z}u_j\mu_{i-j}(R/I)=
\begin{cases}
0 & (i<d),\\
\sum_{j=d}^iu_j\mu_{i-j}(R/I) & (i\ge d).
\end{cases}
$$
In particular, one has
$$
t\cdot r(R/I)=r(I/Q)=r(R).
$$
\end{thm}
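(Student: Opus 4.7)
The plan is to specialize Corollary \ref{extbase} to $M=k$, which is legal because $I\subseteq\m$ forces $k=R/\m$ to be an $R/I$-module. That corollary immediately yields the isomorphism
$$
\Ext_R^i(k,R)\cong\bigoplus_{j\in\Z}\Ext_{R/I}^{i-j}(k,R/I)^{\oplus u_j}
$$
for every $i\in\Z$. Taking $\dim_k$ on both sides and recognizing that $\dim_k\Ext_R^i(k,R)=\mu_i(R)$ and $\dim_k\Ext_{R/I}^{i-j}(k,R/I)=\mu_{i-j}(R/I)$ gives the first equality $\mu_i(R)=\sum_{j\in\Z}u_j\mu_{i-j}(R/I)$ asserted in the theorem.

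Next, I would justify the truncation of the sum. By the definition of the $u_j$ in the Setup we have $u_j=0$ for $j<d$, so only indices $j\ge d$ contribute. On the other hand, the Bass numbers of any module vanish in negative degree, so $\mu_{i-j}(R/I)=0$ whenever $j>i$. For $i<d$ no index satisfies $d\le j\le i$, hence $\mu_i(R)=0$; for $i\ge d$ the sum reduces to $\sum_{j=d}^i u_j\mu_{i-j}(R/I)$, as claimed.

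For the ``in particular'' statement I would specialize to $i=d$. Since $R/I$ is an Artinian local ring with residue field $k$, its zeroth Bass number coincides with its Cohen--Macaulay type, i.e.\ $\mu_0(R/I)=r(R/I)$. The displayed formula collapses to the single term $j=d$, giving
$$
r(R)=\mu_d(R)=u_d\cdot\mu_0(R/I)=t\cdot r(R/I).
$$
Finally, the isomorphism $I/Q\cong(R/I)^{\oplus t}$ recorded in Step \ref{2} of the proof of Theorem \ref{main} (cf.\ \cite[Lemma 2.3]{GOTWY1}) instantly yields $r(I/Q)=t\cdot r(R/I)$, closing the chain $t\cdot r(R/I)=r(I/Q)=r(R)$.

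There is no real obstacle here: the entire argument is bookkeeping once Corollary \ref{extbase} is in hand. The only point that deserves care is keeping straight that $\mu_0$ of an Artinian local ring equals its type, and that the index shifts in $\Ext_{R/I}^{i-j}$ combined with the vanishing ranges of $u_j$ and $\mu_{i-j}(R/I)$ produce exactly the stated truncation.
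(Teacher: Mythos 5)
Your proposal is correct and follows essentially the same route as the paper: apply Corollary \ref{extbase} with $M=k$, compare $k$-dimensions, specialize to $i=d$, and invoke $I/Q\cong(R/I)^{\oplus t}$ for the last equality. The extra remarks on truncating the sum (via $u_j=0$ for $j<d$ and vanishing of negative Bass numbers) are correct bookkeeping that the paper leaves implicit.
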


\begin{proof}
Applying Corollary \ref{extbase} to the $R/I$-module $k$ gives rise to an isomorphism $\Ext_R^i(k,R)\cong\bigoplus_{j\in\Z}\Ext_{R/I}^{i-j}(k,R/I)^{\oplus u_j}$ for each integer $i$.
Compaing the $k$-dimension of both sides, we get $\mu_i(R)=\sum_{j\in\Z}u_j\mu_{i-j}(R/I)$.
Hence we have $r(R)=\mu_d(R)=u_d\mu_0(R/I)=t\cdot r(R/I)=r(I/Q)$, where the last equality comes from the isomorphism $I/Q\cong(R/I)^{\oplus t}$ (see \cite[Lemma 2.3]{GOTWY1}).
Thus all the assertions follow.
\end{proof}

The above theorem recovers a result of Goto, Ozeki, Takahashi, Watanabe and Yoshida.

\begin{cor}\cite[Corollary 2.6(b)]{GOTWY1}
The following are equivalent.
\begin{enumerate}[\rm(1)]
\item
$R$ is Gorenstein.
\item
$R/I$ is Gorenstein and $\nu(I)=d+1$.
\end{enumerate}
\end{cor}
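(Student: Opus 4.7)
The plan is to deduce this corollary as a direct consequence of Theorem \ref{bass}, which establishes the key numerical identity $r(R) = t \cdot r(R/I)$ where $t = \nu(I) - d$. Since by assumption $I$ is a non-parameter $\m$-primary ideal containing a parameter ideal as a reduction, we have $\nu(I) \geq d + 1$, so $t \geq 1$; also $r(R/I) \geq 1$ because $R/I$ is a nonzero Artinian local ring. Thus the product $t \cdot r(R/I)$ of positive integers equals $1$ if and only if both factors equal $1$.

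The equivalence then falls out cleanly. For the implication $(1) \Rightarrow (2)$: if $R$ is Gorenstein then $r(R) = 1$, hence $t = 1$ (giving $\nu(I) = d+1$) and $r(R/I) = 1$; since $R/I$ is Artinian local, type one means $R/I$ is Gorenstein. For $(2) \Rightarrow (1)$: if $\nu(I) = d+1$ and $R/I$ is Gorenstein, then $t = 1$ and $r(R/I) = 1$, so $r(R) = 1$, and since $R$ is assumed Cohen--Macaulay, this forces $R$ to be Gorenstein.

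There is no real obstacle here: the corollary is essentially an unpacking of the formula $r(R) = t \cdot r(R/I)$ together with the observation that $R/I$ is Artinian (hence Cohen--Macaulay, and Gorenstein precisely when its type is one). The only small points to verify in writing are that $t$ is a positive integer (using the hypothesis that $I$ is not a parameter ideal, so $\nu(I) > d$) and that $R/I$ being Gorenstein is correctly identified with $r(R/I) = 1$ in the zero-dimensional local setting.
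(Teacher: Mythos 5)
Your proof is correct and follows exactly the paper's own argument: both deduce the corollary from the identity $t\cdot r(R/I)=r(R)$ of Theorem \ref{bass} and the observation that a product of positive integers equals $1$ precisely when both factors do. The extra care you take in checking $t\ge 1$ and identifying Gorensteinness with type one is implicit in the paper's one-line proof but adds nothing essentially new.
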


\begin{proof}
Theorem \ref{bass} implies $t\cdot r(R/I)=r(R)$.
Hence $r(R)=1$ if and only if $t=r(R/I)=1$.
This shows the assertion.
\end{proof}

To state our next results, let us recall some notions.

A {\em totally reflexive} $R$-module is by definition a finitely generated reflexive $R$-module $G$ such that $\Ext_R^{>0}(G,R)=0=\Ext_R^{>0}(\Hom_R(G,R),R)$.
Note that every finitely generated free $R$-module is totally reflexive.
The {\em Gorenstein dimension} ({\em G-dimension} for short) of a finitely generated $R$-module $M$, denoted by $\gdim_RM$, is defined as the infimum of integers $n\ge0$ such that there exists an exact sequence
$$
0 \to G_n \to G_{n-1} \to \cdots \to G_0 \to M \to 0
$$
of $R$-modules with each $G_i$ totally reflexive.

A Noetherian local ring $R$ is called {\em G-regular} if every totally reflexive $R$-module is free.
This is equivalent to saying that the equality $\gdim_RM=\pd_RM$ holds for all finitely generated $R$-modules $M$.

\begin{rem}\label{rem}
The following local rings are G-regular.
\begin{itemize}
\item
Regular local rings.
\item
Non-Gorenstein Cohen--Macaulay local rings with minimal multiplicity.
\item
Non-Gorenstein almost Gorenstein local rings.
\end{itemize}
For the proofs, we refer to \cite[Proposition 1.8]{greg}, \cite[Examples 3.5]{AM} (see also \cite[Corollary 2.5]{Y}) and \cite[Corollary 4.5]{GTT}, respectively.
\end{rem}

Suppose that $R$ admits a canonical module $\rmK_R$.
We say that $R$ is {\em almost Gorenstein} if there exists an exact sequence
$$
0 \to R \to \rmK_R \to C \to 0
$$
of $R$-modules such that $C$ is an {\em Ulrich $R$-module}, i.e., $C$ is a Cohen--Macaulay $R$-module (of dimension $d-1$) with $\e_\m^0(C)=\nu_R(C)$.

Using our Theorem \ref{main}, we establish a characterization of finiteness of the G-dimension of $R/I$ in terms of the minimal number of generator of $I$.

\begin{thm}\label{t=1}
One has
$$
\nu(I)=d+1\ \Longleftrightarrow\ \gdim_RR/I<\infty.
$$
In particular, if $R$ is G-regular, then $\nu(I)\ge d+2$.
\end{thm}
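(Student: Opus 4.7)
The plan is to rewrite the equivalence as $t = 1 \iff \gdim_R R/I < \infty$ (since $t = \nu(I) - d$) and treat the two implications separately, leveraging Theorem \ref{main} together with the syzygy periodicity from \cite[Corollary 7.4]{GOTWY1}. For the direction $\gdim_R R/I < \infty \Rightarrow t = 1$, the Auslander--Bridger equality forces $\gdim_R R/I = \depth R - \depth R/I = d$, so $\Ext_R^{d+1}(R/I, R) = 0$. Combining with the isomorphism $\Ext_R^{d+1}(R/I, R) \cong (R/I)^{\oplus(t^2-1)}$ supplied by Theorem \ref{main} and using $R/I \neq 0$, we obtain $t^2 = 1$, hence $t = 1$.

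For the converse, set $M = \Syz_R^d(R/I)$. Specializing \cite[Corollary 7.4]{GOTWY1} at $i = d+1$ with $t = 1$ yields $\Syz_R^1 M \cong M$, so the minimal free resolution of $M$ is periodic of period one and splices into a doubly infinite exact complex $\mathcal{F}\colon \cdots \xrightarrow{\varphi} R^{\oplus n} \xrightarrow{\varphi} R^{\oplus n} \xrightarrow{\varphi} \cdots$ whose cycles are $M$ at every spot. Theorem \ref{main} simultaneously gives $\Ext_R^i(M, R) \cong \Ext_R^{i+d}(R/I, R) = 0$ for all $i > 0$, which allows us to dualize the short exact sequence $0 \to M \to R^{\oplus n} \to M \to 0$ into $0 \to \Hom_R(M, R) \to R^{\oplus n} \to \Hom_R(M, R) \to 0$; splicing shows $\Hom_R(\mathcal{F}, R)$ is again exact, and a second application of $\Hom_R(-, R)$ recovers $\mathcal{F}$ via canonical biduality on the free modules, identifying the cycles and giving both $M \cong \Hom_R(\Hom_R(M, R), R)$ and $\Ext_R^{>0}(\Hom_R(M, R), R) = 0$. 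Thus $M$ is totally reflexive in the paper's sense, so $\gdim_R R/I \leq d$. The main obstacle I anticipate is organizing this double dualization cleanly so that all three clauses of the totally-reflexive condition (reflexivity and two Ext-vanishings) are verified in one stroke.

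For the \emph{In particular} claim, suppose $R$ is G-regular and, toward a contradiction, $\nu(I) = d + 1$. The equivalence just established forces $\gdim_R R/I < \infty$, so the module $M = \Syz_R^d(R/I)$ constructed above is totally reflexive and therefore, by G-regularity, free. But the periodicity $\Syz_R^1 M \cong M$ compels a free $M$ to vanish, whereas the Auslander--Buchsbaum inequality $\pd_R R/I \geq \depth R - \depth R/I = d$ guarantees $\Syz_R^d(R/I) \neq 0$. This contradiction rules out $\nu(I) = d+1$, and combined with $\nu(I) > d$ (since $I$ is not a parameter ideal) we conclude $\nu(I) \geq d+2$.
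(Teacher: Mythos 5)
Your proposal is correct, and while the forward implication ($\gdim_RR/I<\infty\Rightarrow t=1$) coincides with the paper's argument (Auslander--Bridger gives $\gdim_RR/I=d$, hence $\Ext_R^{d+1}(R/I,R)=0$, and Theorem \ref{main} forces $t^2-1=0$), your treatment of the converse and of the G-regular consequence is genuinely different. For $t=1\Rightarrow\gdim_RR/I<\infty$, the paper stays in the derived category: Theorem \ref{main} gives $\rhom_R(R/I,R)\cong R/I[-d]$, the homothety $R/I\to\rhom_R(\rhom_R(R/I,R),R)$ is then seen to be an isomorphism, and Christensen's criterion (2.2.3) finishes. You instead build an explicit complete resolution: the period-one syzygy $\Syz_R^1M\cong M$ for $M=\Syz_R^d(R/I)$ from \cite[Corollary 7.4]{GOTWY1}, together with $\Ext_R^{>0}(M,R)=0$ from Theorem \ref{main}, yields a doubly infinite exact complex of free modules whose dual is also exact, so $M$ is totally reflexive and $\gdim_RR/I\le d$. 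The double-dualization step you flag is exactly the standard (but fiddly) lemma that cycles of a totally acyclic free complex are totally reflexive; your sketch of it is sound. For the \emph{In particular} statement the paper passes to $I$, uses G-regularity to get $\pd_RI<\infty$, and invokes the Vasconcelos--Ferrand theorem \cite[Theorem 2.2.8]{BH} (finite projective dimension plus $I/I^2$ free forces $I$ to be a complete intersection); you instead note that G-regularity makes the totally reflexive module $M$ free, whence the periodicity $\Syz_R^1M\cong M$ of the minimal resolution forces $M=0$, contradicting the Auslander--Buchsbaum formula. Your route is more self-contained (no derived-category biduality criterion, no Vasconcelos--Ferrand) and makes the complete resolution visible, at the cost of having to verify the total-acyclicity lemma by hand; the paper's route is shorter given that Theorem \ref{main} is already phrased at the level of $\rhom$.
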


\begin{proof}
As to the first assertion, it suffices to show that $t=1$ if and only if $R/I$ has finite G-dimension as an $R$-module.

The `if' part:
As $R/I$ has depth $0$, it has G-dimension $d$ by \cite[(1.4.8)]{C}, and hence $\Ext_R^{>d}(R/I,R)=0$ by \cite[(1.2.7)]{C}.
It follows from Theorem \ref{main} that $u_i=0$ for all $i>d$.
In particular, we have $t^2-1=u_{d+1}=0$, which implies $t=1$.

The `only if' part:
By Theorem \ref{main} we have $\rhom_R(R/I,R)\cong R/I[-d]$.
It is observed from this that the homothety morphism $R/I\to\rhom_R(\rhom_R(R/I,R),R)$ is an isomorphism.
It follows from \cite[(2.2.3)]{C} that the $R$-module $R/I$ has finite G-dimension.

Thus the first assertion of the theorem follows.
As for the second assertion, suppose that $t=1$.
Then $R/I$ has finite G-dimension, and so does $I$ by \cite[(1.2.9)]{C}.
Since $R$ is G-regular, $I$ has finite projective dimension.
As $I$ is an Ulrich ideal, $I/I^2$ is a free $R/I$-module.
Hence we see from \cite[Theorem 2.2.8]{BH} that $I$ is generated by an $R$-sequence, which contradicts the assumption that $I$ is a non-parameter $\m$-primary ideal.
Therefore we have $t\ge2$, which means $\nu(I)\ge d+2$.
\end{proof}

As a consequence of the above theorem, we have a characterization of Gorenstein local rings.

\begin{cor}\label{4}
The following are equivalent.
\begin{enumerate}[\rm(1)]
\item
$R$ is Gorenstein.
\item
There is an Ulrich ideal $I$ of $R$ with finite G-dimension such that $R/I$ is Gorenstein.
\end{enumerate}
\end{cor}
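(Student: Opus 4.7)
The plan is to deduce Corollary \ref{4} as a direct combination of Theorem \ref{t=1} with the preceding corollary (the recovery of \cite[Corollary 2.6(b)]{GOTWY1}), which together characterize Gorensteinness of $R$ by the two conditions ``$R/I$ is Gorenstein'' and ``$\nu(I)=d+1$''. The role of Theorem \ref{t=1} is precisely to convert between $\nu(I)=d+1$ and finiteness of the G-dimension of $R/I$.

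For the implication (1) $\Rightarrow$ (2), I would take $I$ to be the Ulrich ideal furnished by the Setup. Since $R$ is Gorenstein, every finitely generated $R$-module has finite G-dimension, so in particular $\gdim_RI<\infty$. On the other hand, the preceding corollary applied in the forward direction immediately forces $R/I$ to be Gorenstein (with $\nu(I)=d+1$ as a by-product), so this $I$ witnesses (2).

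For the converse (2) $\Rightarrow$ (1), let $I$ be an Ulrich ideal as in (2). Since $R$ is totally reflexive over itself, the short exact sequence $0\to I\to R\to R/I\to0$ combined with $\gdim_RI<\infty$ yields $\gdim_RR/I<\infty$ by \cite[(1.2.9)]{C}. Theorem \ref{t=1} then gives $\nu(I)=d+1$, and feeding this together with the hypothesis ``$R/I$ Gorenstein'' into the preceding corollary concludes that $R$ is Gorenstein.

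There is no substantive obstacle in either direction; both implications fall out at once from the results already assembled in this section. The only point worth flagging is a mild ambiguity in the phrase ``$I$ has finite G-dimension''—whether this refers to $\gdim_RI$ or $\gdim_RR/I$—but the short exact sequence argument above shows the two are equivalent, so either reading produces the same statement.
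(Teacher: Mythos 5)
Your argument for (2) $\Rightarrow$ (1) in the non-parameter case is correct and essentially interchangeable with the paper's: the paper feeds $t=1$ (from Theorem \ref{t=1}) and the Gorensteinness of $R/I$ into the Bass-number formula of Theorem \ref{bass}, whereas you route through the preceding corollary, which is itself a consequence of Theorem \ref{bass}. The genuine gap lies in the scope of the phrase ``Ulrich ideal.'' Everything in Section \ref{rhom} is proved under the Setup, which assumes $I$ is a \emph{non-parameter} $\m$-primary ideal; Corollary \ref{4}, by contrast, must allow $I$ to be a parameter ideal, or else (1) $\Rightarrow$ (2) is simply false: a discrete valuation ring is Gorenstein but has no non-parameter Ulrich ideal at all, since every $\m$-primary ideal is principal (and more generally any G-regular Gorenstein ring has none, by Theorem \ref{t=1}). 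Your proof of (1) $\Rightarrow$ (2) takes ``the Ulrich ideal furnished by the Setup,'' but the Setup furnishes nothing---it is a hypothesis on a given $I$, and condition (1) supplies no such $I$. The paper instead exhibits the witness explicitly: any parameter ideal $Q$ works, since $\gdim_R R/Q\le\pd_R R/Q=d<\infty$ and $R/Q$ is Gorenstein exactly when $R$ is.

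Symmetrically, in (2) $\Rightarrow$ (1) you must dispose of the case where the given $I$ is a parameter ideal before invoking Theorem \ref{t=1} and the preceding corollary, which are only available under the Setup. That case is immediate ($R/Q$ Gorenstein with $Q$ generated by a regular sequence forces $R$ Gorenstein), and the paper dismisses it in one line, but your write-up omits it. Your closing observation that $\gdim_RI<\infty$ and $\gdim_RR/I<\infty$ are equivalent via $0\to I\to R\to R/I\to0$ is correct and resolves the ambiguity you flag.
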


\begin{proof}
(1) $\Rightarrow$ (2):
Any parameter ideal $I$ of $R$ is such an ideal as in the condition (2).

(2) $\Rightarrow$ (1):
It is trivial if $I$ is a parameter ideal, so suppose that $I$ is not so.
The Gorensteinness of $R/I$ implies $\mu_0(R/I)=1$ and $\mu_{>0}(R/I)=0$, and hence $\mu_i(R)=u_i$ for all $i\ge d$ by Theorem \ref{bass}.
Since $R/I$ has finite G-dimension, we have $t=1$ by Theorem \ref{t=1}, whence $u_d=1$ and $u_{>d}=0$.
Thus we get $\mu_d(R)=1$ and $\mu_{>d}(R)=0$, which shows that $R$ is Gorenstein.
\end{proof}

\begin{rem}
Corollary \ref{4} is a special case of \cite[Theorem 2.3]{psit}, which implies that a (not necessarily Cohen--Macaulay) local ring $R$ is Gorenstein if and only if it possesses a (not necessarily Ulrich) ideal $I$ of finite G-dimension such that $R/I$ is Gorenstein.
\end{rem}

Using our theorems, we observe that the minimal numbers of generators of Ulrich ideals are constant for certain almost Gorenstein rings.

\begin{cor}\label{2.4a}
Let $R$ be a non-Gorenstein almost Gorenstein local ring such that $r(R)$ is a prime number.
Then $R/I$ is a Gorenstein ring and $\nu(I)  = r(R) + d$.
\end{cor}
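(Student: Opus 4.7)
The plan is to combine the Bass-number formula of Theorem~\ref{bass} with the G-regularity of non-Gorenstein almost Gorenstein rings from Remark~\ref{rem}, and exploit the primality of $r(R)$ to pin down $t$.

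First I would record the identity $t\cdot r(R/I)=r(R)$ supplied by Theorem~\ref{bass}. This exhibits $r(R)$ as a product of two positive integers $t$ and $r(R/I)$. Since $r(R)$ is assumed prime, this forces $\{t,\,r(R/I)\}=\{1,\,r(R)\}$, so the entire content of the corollary boils down to ruling out the case $t=1$.

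This is exactly where the almost Gorenstein hypothesis does its work: by Remark~\ref{rem}, a non-Gorenstein almost Gorenstein local ring $R$ is G-regular, and then the second assertion of Theorem~\ref{t=1} yields $\nu(I)\ge d+2$, i.e.\ $t\ge 2$. Consequently we must be in the alternative $t=r(R)$ and $r(R/I)=1$. The first of these immediately gives $\nu(I)=t+d=r(R)+d$; for the Gorenstein conclusion on $R/I$, note that $R/I$ is an Artinian local ring (as $I$ is $\m$-primary), and an Artinian local ring of Cohen--Macaulay type $1$ is Gorenstein.

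There is essentially no obstacle here: once Theorems~\ref{main}, \ref{bass}, \ref{t=1} and Remark~\ref{rem} are at hand, the argument is a two-line book-keeping exercise, and the only genuine ingredient is that primality collapses the factorization $t\cdot r(R/I)=r(R)$ into a single admissible case after $t=1$ is eliminated by G-regularity.
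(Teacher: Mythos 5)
Your proposal is correct and follows essentially the same route as the paper: the identity $t\cdot r(R/I)=r(R)$ from Theorem~\ref{bass}, the exclusion of $t=1$ via Theorem~\ref{t=1} together with the G-regularity statement in Remark~\ref{rem}, and then primality forcing $t=r(R)$ and $r(R/I)=1$. The paper's proof is just a terser version of exactly this argument.
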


\begin{proof}
It follows from Theorem \ref{bass} that $t\cdot r(R/I)=r(R)$.
Since $t>1$ by Theorem \ref{t=1} and Remark \ref{rem}, we must have that $r(R/I)=1$ and $r(R)=t=\nu(I)-d$.
\end{proof}

\begin{cor}\label{referee}
Let $R$ be a two-dimensional rational singularity. Then $\nu(I)  = r(R) + 2$.
\end{cor}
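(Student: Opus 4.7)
The plan is to apply Theorem \ref{bass}, which yields $r(R) = t \cdot r(R/I)$, and thereby reduce the target equality $\nu(I) = r(R) + 2$ to proving $r(R/I) = 1$; indeed $\nu(I) - 2 = t$, so the equality amounts exactly to the Gorensteinness of the Artinian quotient $R/I$.

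If $R$ is Gorenstein, then $r(R) = 1$ and any non-parameter Ulrich ideal $I$ satisfies $\nu(I) = d + 1 = 3$ by \cite[Corollary 2.6(b)]{GOTWY1}, so the formula holds trivially. Hence I may assume $R$ is non-Gorenstein. By \cite{GTT}, every two-dimensional rational singularity is almost Gorenstein, so Remark \ref{rem} guarantees that $R$ is G-regular, and Theorem \ref{t=1} then supplies $t \geq 2$ for every non-parameter Ulrich ideal $I$. In particular $I$ is not parametric, and the hypotheses of the setup of Section \ref{rhom} apply (the hypothesis that $I$ contains a parameter reduction is harmless since a two-dimensional rational singularity may be assumed to have infinite residue field).

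The essential step---and the main obstacle---is to establish $r(R/I) = 1$. My plan is to invoke the companion paper \cite{GOTWY2}, in which the Ulrich ideals of a two-dimensional rational singularity are classified by means of their representation-theoretic structure (closely tied to indecomposable maximal Cohen--Macaulay modules and the McKay-type correspondence). From this classification one reads off that $R/I$ is a Gorenstein Artinian local ring for every Ulrich ideal $I$. Once $r(R/I) = 1$ is secured, Theorem \ref{bass} immediately yields $r(R) = t \cdot 1 = \nu(I) - 2$, as desired.

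The Gorensteinness of $R/I$ genuinely requires the two-dimensional rational singularity hypothesis and is not accessible from the almost-Gorenstein framework of Corollary \ref{2.4a} alone: in that corollary the primality of $r(R)$, combined with $t > 1$ and $t \mid r(R)$, forces $r(R/I) = 1$, whereas for a general two-dimensional rational singularity $r(R)$ can be composite (e.g.\ Veronese subrings can realize arbitrary types). The finer representation-theoretic input supplied by \cite{GOTWY2} is therefore indispensable, and the bulk of the work in the proof is to cite or re-derive the appropriate result from there.
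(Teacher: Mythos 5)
Your proposal is correct and follows essentially the same route as the paper: the official proof simply cites \cite[Corollary 6.5]{GOTWY2} for $r(R/I)=1$ and concludes via Theorem \ref{bass}, exactly the reduction you identify. Your Gorenstein/non-Gorenstein case split and the detour through G-regularity are superfluous (Theorem \ref{bass} together with $r(R/I)=1$ gives the formula in all cases), but harmless.
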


\begin{proof}
Because $r(R/I) =1$ by \cite[Corollary 6.5]{GOTWY2}, the equality follows from Theorem \ref{bass}.
\end{proof}

The following corollary is another consequence of Theorem \ref{main}.
Note that such an exact sequence as in the corollary exists for every almost Gorenstein ring.

\begin{cor}\label{2.5}
Suppose that $R$ admits a canonical module $\rmK_R$, and that there is an exact sequence $0 \to R \to \rmK_R \to C \to 0$ of $R$-modules.
If $\nu(I)\ge d+2$, then $\ann_RC\subseteq I$.
\end{cor}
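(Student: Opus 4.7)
The plan is to apply the long exact sequence of $\Ext$ arising from the given short exact sequence $0 \to R \to \rmK_R \to C \to 0$. Since Theorem \ref{main} determines $\Ext^i_R(R/I,R)$ completely, the strategy is to exhibit $\Ext^{d+1}_R(R/I,R)$ as a homomorphic image of some $\Ext_R^\ast(R/I,C)$; the latter is automatically annihilated by $\ann_R C$, so the formula from Theorem \ref{main} will then force $\ann_R C \subseteq I$.

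The first step is to establish the vanishing $\Ext^i_R(R/I,\rmK_R)=0$ for every $i \ne d$. Upper vanishing ($i>d$) holds because $\rmK_R$ has injective dimension $d$; lower vanishing ($i<d$) holds because the grade of the $\m$-primary ideal $I$ on $\rmK_R$ equals $\depth \rmK_R = d$. With this in hand, the tail of the long exact sequence collapses to
$$
\Ext^d_R(R/I,\rmK_R) \to \Ext^d_R(R/I,C) \to \Ext^{d+1}_R(R/I,R) \to 0,
$$
exhibiting $\Ext^{d+1}_R(R/I,R)$ as a quotient of $\Ext^d_R(R/I,C)$.

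The key step is then to combine this quotient relation with Theorem \ref{main}. By that theorem, $\Ext^{d+1}_R(R/I,R)\cong (R/I)^{\oplus(t^2-1)}$, where $t=\nu(I)-d$. The hypothesis $\nu(I) \ge d+2$ gives $t \ge 2$, so $t^2-1 \ge 3 > 0$; consequently this free $R/I$-module is nonzero, and its $R$-annihilator is exactly $I$. Since $\ann_R C$ annihilates every $\Ext$-module whose second argument is $C$, it annihilates this quotient as well, which yields $\ann_R C \subseteq I$.

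No serious obstacle is anticipated; the only slightly delicate point is the vanishing of $\Ext^i_R(R/I,\rmK_R)$ outside degree $d$, which is standard. The corollary ultimately rests on the precise exponent $t^2-1$ furnished by Theorem \ref{main}, which is positive exactly when $t \ge 2$, explaining why the hypothesis $\nu(I) \ge d+2$ is the natural one.
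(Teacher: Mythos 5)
Your proof is correct and follows essentially the same route as the paper: both arguments exhibit $\Ext_R^{d+1}(R/I,R)\cong(R/I)^{\oplus(t^2-1)}\ne0$ as a quotient of a module killed by $\ann_RC$, using the vanishing of Ext against $\rmK_R$, and then read off $\ann_RC\subseteq I$ from Theorem \ref{main}. The only cosmetic difference is that the paper dimension-shifts to $M=\Syz_R^d(R/I)$ and uses $\Ext_R^{>0}(M,\rmK_R)=0$ for maximal Cohen--Macaulay $M$, whereas you work directly in degree $d+1$ using $\operatorname{injdim}_R\rmK_R=d$; these are equivalent.
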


\begin{proof}
We set $\fka = \ann_RC$ and $M = \Syz_R^d(R/I)$. Then $M$ is a maximal Cohen--Macaulay $R$-module.
Hence $\Ext_R^{>0}(M,\rmK_R) = 0$, and in particular there is a surjection $\Hom_R(M,C) \twoheadrightarrow \Ext_R^1(M,R)$.
Since $\Ext_R^1(M,R) \cong \Ext_R^{d+1}(R/I, R)$ and $t>1$, the ideal $\fka$ annihilates $R/I$ by Theorem \ref{main}, whence $I$ contains $\fka$. 
\end{proof}

Now we state the last theorem in this section, whose first assertion is proved by Kei-ichi Watanabe in the case where $R$ is a numerical semigroup ring over a field and all ideals considered are monomial. 

\begin{thm}\label{2.6}
Let $R$ be a non-Gorenstein local ring of dimension $d$.
Assume that $R$ is almost Gorenstein, that is, there exists an exact sequence $0 \to R \to \rmK_R \to C \to 0$ such that $C$ is Ulrich.
\begin{enumerate}[\rm(1)]
\item
If $d=1$, then $I = \fkm$.
\item
Suppose that $k$ is infinite.
If $\m C = IC$, then $I=\m$.
\end{enumerate}
\end{thm}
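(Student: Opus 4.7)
The approach is to prove part (1) directly via an annihilator argument and then to reduce part (2) to part (1) by modding out an appropriate regular sequence of length $d-1$ drawn from $I$.

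For part (1), since $C$ is Ulrich of dimension $d-1=0$, $C$ is a $k$-vector space, so $\fkm C=0$; as $R$ is non-Gorenstein, $C\ne 0$, whence $\ann_R C=\fkm$. By Remark \ref{rem}, $R$ is G-regular, so Theorem \ref{t=1} yields $\nu(I)\ge d+2$, putting us in the hypothesis of Corollary \ref{2.5}: $\ann_R C\subseteq I$. Thus $\fkm\subseteq I$, and $I=\fkm$.

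For part (2), the hypothesis $\fkm C = IC$ says that $I$ is a reduction of $\fkm$ with respect to the Ulrich $R$-module $C$. As $C$ has dimension $d-1$, the analytic spread of $\fkm$ with respect to $C$ is $d-1$, so minimal such reductions are generated by $d-1$ elements. Because $k$ is infinite, a Northcott--Rees type argument allows us to choose $a_1,\ldots,a_{d-1}\in I$ simultaneously satisfying: (i) $a_1,\ldots,a_{d-1}$ is an $R$-regular sequence (hence automatically $\rmK_R$-regular and $C$-regular, the latter because $(a_1,\ldots,a_{d-1})C=\fkm C$ forces it to be a system of parameters on the maximal Cohen--Macaulay module $C$); (ii) $(a_1,\ldots,a_{d-1})C=\fkm C$; and (iii) there exists $a_d\in I$ such that $Q=(a_1,\ldots,a_d)$ is a minimal reduction of $I$.

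Setting $\bar R = R/(a_1,\ldots,a_{d-1})$, $\bar{\rmK}_R = \rmK_R/(a_1,\ldots,a_{d-1})\rmK_R$, $\bar C = C/(a_1,\ldots,a_{d-1})C$, and $\bar I = I\bar R$, the regularity of the chosen sequence on $R$, $\rmK_R$, and $C$ produces an exact sequence $0\to\bar R\to\bar{\rmK}_R\to\bar C\to 0$ with $\bar{\rmK}_R\cong\rmK_{\bar R}$. By (ii), $\bar{\fkm}\bar C=0$, so $\bar C$ is an Ulrich $\bar R$-module of dimension $0$; hence $\bar R$ is almost Gorenstein of dimension $1$. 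Moreover, $\bar R$ is non-Gorenstein, since $R$ is and $(a_1,\ldots,a_{d-1})$ is an $R$-regular sequence. A direct computation using $I^2 = QI$ together with the decomposition $I/I^2 \cong (R/I)^{d+t}$ then yields $\bar I^2 = a_d\bar I$ and $\bar I/\bar I^2\cong (R/I)^{t+1}$ is free over $\bar R/\bar I = R/I$, so $\bar I$ is a non-parameter Ulrich ideal of $\bar R$. Applying part (1) to $(\bar R,\bar I)$ gives $\bar I = \bar{\fkm}$; since $(a_1,\ldots,a_{d-1})\subseteq I$, this forces $I=\fkm$.

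The main obstacle is the simultaneous realization of conditions (i)--(iii) on $a_1,\ldots,a_{d-1}$: one must combine the module-reduction theory needed to place a minimal reduction of $\fkm$ with respect to $C$ inside $I$ with both $R$-regularity and extendability to a minimal reduction of $I$. The infiniteness of $k$ is essential for this, since each of the three requirements is a density condition. Once such a sequence is chosen, the reduction to dimension one is routine.
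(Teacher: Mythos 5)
Your proof is correct and takes essentially the same route as the paper: part (1) is the paper's argument verbatim (G-regularity gives $\nu(I)\ge d+2$ via Theorem \ref{t=1} and Remark \ref{rem}, and Corollary \ref{2.5} then yields $\fkm=\ann_RC\subseteq I$), and part (2) is the same reduction-modulo-elements-of-$I$ strategy, made possible by the hypothesis $\fkm C=IC$ and the infiniteness of $k$. The only difference is organizational: the paper inducts on $d$, passing to $R/(a)$ for one superficial element $a\in I$ at a time, whereas you descend to dimension one in a single step modulo $d-1$ general elements of $I$ and verify directly that the Ulrich ideal and the defining exact sequence of the almost Gorenstein property specialize correctly.
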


\begin{proof}
(1) As $C\ne0=\m C$, we have $\nu(I)\ge d+2$ by Theorem \ref{t=1} and Remark \ref{rem}.
Hence $I = \m$ by Corollary  \ref{2.5}.

(2) We may assume by (1) that $d > 1$ and that our assertion holds true for $d-1$.
We set $\fka = \ann_RC$ and $S = R/\fka$.
Then $\m S$ is integral over $I S$, because $\m C = IC$.
Therefore, without loss of generality, we may assume that $a = a_1$ (a part of a minimal basis of a reduction $Q = (a_1, a_2, \ldots, a_d)$ of $I$) is a superficial element of $C$ with respect to $\m$.
Let $\ol{R} = R/(a)$, $\ol{I} = I/(a)$, and  $\ol{C} = C/aC$.
Then $\ol{R}$ is a non-Gorenstein  almost Gorenstein ring, $\ol{C}$ is an Ulrich $\ol{R}$-module,  and we have an exact sequence $0 \to \ol{R} \to \rmK_{\ol{R}} \to \ol{C} \to 0$ of $\ol{R}$-modules (\cite[Proof of Theorem 3.7 (2)]{GTT}), because $\rmK_{\ol{R}} \cong \rmK_R/a\rmK_R$ (\cite[Korollar 6.3]{HK}).
Consequently, since $\m \ol{C} = I \ol{C}$, we get $I \ol{R} = \m\ol{R}$ by the hypothesis of induction, so that $I = \m$.
\end{proof}


\section{The expected core of Ulrich ideals}\label{const}
In this section let $(R,\m,k)$ be a $d$-dimensional Cohen-Macaulay local ring with canonical module $\rmK_R$. We denote by $\calX_R$ the set of non-parameter Ulrich ideals of $R$. Let $$\fka = \sum_{f \in K_R~\text{such~that}~0:_Rf  =0}\left[Rf:_R\rmK_R\right], $$ which is the {\it expected core} of Ulrich ideals in the case where $R$ is an almost Gorenstein but non-Gorenstein ring. In fact we have the following.

\begin{thm}\label{3.4}
Suppose that $R$ is a non-Gorenstein almost Gorenstein local domain. Then  the following assertions hold true.
\begin{enumerate}[$(1)$]
\item If $I \in \calX_R$, then $\fka \subseteq I$. 
\item Suppose that $R_\fkp$ is a Gorenstein ring for every $\fkp \in \Spec R \setminus \{{\fkm}\}$. Then $\sqrt{\fka}=\m$.
\item Suppose that  $\dim R = 2$ and $r(R) = 2$. If $R_\fkp$ is a Gorenstein ring for every $\fkp \in \Spec R \setminus \{{\fkm}\}$, then $\calX_R$ is a finite set and every $I \in \calX_R$ is minimally generated by four elements.
\end{enumerate}
\end{thm}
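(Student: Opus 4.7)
The plan is to prove (1) and (2) independently and then combine them with Corollary~\ref{2.4a} to obtain (3).

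For (1), I would realize each summand $Rf:_R\rmK_R$ of $\fka$ as the annihilator $\ann_R(\rmK_R/Rf)$ and feed it into Corollary~\ref{2.5}. Since $R$ is a domain and $\rmK_R$ is torsion-free, any nonzero $f\in\rmK_R$ is a non-zerodivisor, and multiplication by $f$ yields an exact sequence
\[
0 \to R \xrightarrow{1\mapsto f} \rmK_R \to \rmK_R/Rf \to 0
\]
of the shape required by Corollary~\ref{2.5}. Because $R$ is non-Gorenstein almost Gorenstein, Remark~\ref{rem} shows $R$ is G-regular, and so Theorem~\ref{t=1} forces $\nu(I)\ge d+2$ for every $I\in\calX_R$. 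Corollary~\ref{2.5} then gives $Rf:_R\rmK_R\subseteq I$, and summing over $f$ yields $\fka\subseteq I$.

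For (2), I would exploit that $R$ is a domain to view $\rmK_R$ as a rank-one fractional ideal inside $L=\operatorname{Frac}(R)$. A direct calculation then gives $Rf:_R\rmK_R=f\cdot(R:_L\rmK_R)$ for each nonzero $f\in\rmK_R$, so
\[
\fka \;=\; \rmK_R\cdot(R:_L\rmK_R)
\]
is the trace ideal of $\rmK_R$. The formation of this ideal commutes with localization, and $\fka_\fkp=R_\fkp$ exactly when $(\rmK_R)_\fkp$ is free, equivalently when $R_\fkp$ is Gorenstein; under the hypothesis of (2), $V(\fka)\subseteq\{\fkm\}$. A standard projectivity argument shows $\fka=R$ would imply $\rmK_R$ is free, hence $R$ Gorenstein, which is excluded; so $\fka$ is proper and $\sqrt{\fka}=\fkm$.

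For (3), Corollary~\ref{2.4a} applied with the prime $r(R)=2$ gives $\nu(I)=r(R)+d=4$ and forces $R/I$ to be Gorenstein for every $I\in\calX_R$. Combined with (1) and (2), each such $I$ contains the $\fkm$-primary ideal $\fka$ and thus descends to an ideal of the Artinian local ring $R/\fka$ of bounded length. The main obstacle is to deduce finiteness of $\calX_R$, since an Artinian local ring in general admits infinitely many ideals. To overcome this, I would fix a parameter reduction $Q\subseteq I$ and exploit the rigid Ulrich constraints $I^2=QI$ and $I/Q\cong(R/I)^{\oplus 2}$: the two ``extra'' generators of $I$ have square-zero images in $R/Q$ and, modulo $Q+\fka$, lie in a module of bounded length, yielding only finitely many possible residue configurations. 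Together with the Ext-rigidity of Theorem~\ref{main}, which determines the isomorphism class of $R/I$, this should pin $\calX_R$ down to a finite set.
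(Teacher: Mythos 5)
Your arguments for (1) and (2) are correct and essentially the paper's: (1) is verbatim the intended application of Corollary~\ref{2.5} via Remark~\ref{rem} and Theorem~\ref{t=1}, and (2) repackages the paper's localization argument in terms of the trace ideal $\fka=\rmK_R\cdot(R:_L\rmK_R)$ --- a nice formulation which, as a bonus, cleanly yields $\fka\ne R$ (invertibility of $\rmK_R$ would force $R$ Gorenstein), a point the paper leaves implicit.

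Part (3), however, has a genuine gap: you never actually prove finiteness of $\calX_R$. Reducing to the Artinian ring $R/\fka$ cannot suffice by itself --- as you note, an Artinian local ring with infinite residue field typically has infinitely many ideals --- and the proposed remedy (bounding ``residue configurations'' of the extra generators modulo $Q+\fka$, plus the Ext-rigidity of Theorem~\ref{main}) does not close this: the reduction $Q$ varies with $I$, the elements of a fixed finite-length module still form infinitely many candidates over an infinite field, and Theorem~\ref{main} only constrains the isomorphism class of $R/I$, not the ideal $I$ itself. The missing ingredient is structural: since $\dim R=2$ and $r(R)=2$, by \cite[Corollary 3.10]{GTT} the almost Gorenstein sequence can be taken with $C\cong R/\fkp$ a DVR. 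Hence $\fkp=\ann_RC\subseteq\fka$, and the ideals of $R$ containing $\fkp$ are linearly ordered, namely $\fkp+x^mR$ where $\m=\fkp+(x)$. Since $\fkp\subsetneq\fka\subseteq\m$ by (2), one gets $\fka=\fkp+x^nR$ for some $n\ge1$, and then (1) forces every $I\in\calX_R$ to be one of the finitely many ideals $\fkp+x^\ell R$ with $1\le\ell\le n$. Your appeal to Corollary~\ref{2.4a} for $\nu(I)=4$ is correct and matches the paper; it is only the finiteness claim that needs this chain argument.
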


\begin{proof} (1) For each $f \in \rmK_R$ such that $0:_Rf = 0$ we have an exact sequence $$0 \to R \overset{\varphi}{\longrightarrow} \rmK_R \to C \to 0$$ with $\varphi (1) = f$ and applying Corollary \ref{2.5} to the sequence, we get $\fka \subseteq I$ by Theorem \ref{t=1}.

(2) Let $\fkp \in \Spec R \setminus \{\m\}$. Then $[\rmK_R]_\fkp =\rmK_{R_\fkp} \cong R_\fkp$, since $R_\fkp$ is a Gorenstein ring. Choose an element $f \in \rmK_R$ so that $[\rmK_R]_\fkp = R_\fkp \frac{f}{1}$. Then $0:_Rf = 0$ and $Rf:_R\rmK_R \not\subseteq \fkp$. Hence  $\fka \not\subseteq \fkp$ and therefore $\sqrt{\fka} = \m$.

(3) We have an exact sequence
$$0 \to R \to \rmK_R \to C \to 0$$ of $R$-modules such that $C = R/\fkp$ is a DVR (\cite[Corollary 3.10]{GTT}). Then $\fkp \subseteq \fka$ by the definition of $\fka$.  Since $\fka \ne \fkp$ by assertion (2), we have $\fka = \fkp + x^nR$ for some $n > 0$ where $x \in \m$ such that $\m = \fkp + (x)$. Let $I \in \calX_R$. Then because $\fka \subseteq I$ by assertion (1), we get $I = \fkp + x^\ell R$ with $1 \le \ell \le n$. Hence the set $\calX_R$ is finite. By Corollary \ref{2.4a} every $I \in \calX_R$ is minimally generated by four elements.
\end{proof}

\begin{cor}
Let $R$ be a two-dimensional normal local ring. Assume that $R$ is a non-Gorenstein almost Gorenstein ring with $\mathrm{r}(R) = 2$. Then $\calX_R$ is a finite set and every $I \in \calX_R$ is minimally generated by four elements.
\end{cor}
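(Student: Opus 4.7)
The plan is to reduce the corollary directly to Theorem \ref{3.4}(3), so the task is simply to verify its hypotheses under the assumption that $R$ is a two-dimensional normal local ring which is non-Gorenstein almost Gorenstein with $r(R)=2$. Two of the hypotheses, namely $\dim R = 2$ and $r(R)=2$, are given verbatim, so what remains is to check that $R$ is a domain and that $R_\fkp$ is Gorenstein for every $\fkp\in\Spec R\setminus\{\fkm\}$.

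First I would note that a normal local ring is a domain, so the domain hypothesis of Theorem \ref{3.4}(3) is automatic. Next, for $\fkp\in\Spec R\setminus\{\fkm\}$ we have $\height\fkp \le 1$ in the two-dimensional ring $R$, and $R_\fkp$ is again normal as a localization of the normal ring $R$. A normal local ring of dimension $0$ is a field, and a normal local ring of dimension $1$ is a DVR; in either case $R_\fkp$ is regular and in particular Gorenstein. Hence the Gorenstein-on-the-punctured-spectrum hypothesis of Theorem \ref{3.4}(3) is satisfied.

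With all hypotheses verified, Theorem \ref{3.4}(3) applies and yields that $\calX_R$ is finite and that every $I\in\calX_R$ satisfies $\nu(I)=4$, which is precisely the conclusion of the corollary. There is no real obstacle here: the content of the corollary is just the observation that ``two-dimensional normal'' is a convenient sufficient condition packaging together ``domain'' and ``Gorenstein on the punctured spectrum'' for the application of the structural result Theorem \ref{3.4}(3).
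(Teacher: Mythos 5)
Your proof is correct and is exactly the argument the paper intends: the corollary is stated without proof as an immediate consequence of Theorem \ref{3.4}(3), and your verification that a two-dimensional normal local ring is a domain whose localizations at non-maximal primes are regular (hence Gorenstein) is precisely the omitted reduction.
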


\begin{rem}
We know no examples of non-Gorenstein almost Greenstein local rings in which Ulrich ideals do not possess a common number of generators.
\end{rem}

We explore a few examples. Let $S = k[X,Y,Z,W]$ be the polynomial ring over a field $k$. Let $n \ge 1$ be an integer and consider the matrix $\Bbb M =( \begin{smallmatrix}
X^n&Y&Z\\
Y&Z&W
\end{smallmatrix})$. We set $T = S/\mathrm{I}_2(\Bbb M)$ where $\mathrm{I}_2(\Bbb M)$ denotes the ideal of $S$ generated by two by two minors of $\Bbb M$. Let $x,y,z,w$ denote the images of $X,Y,Z,W$ in $T$ respectively. We set $R = T_M$, where $M = (x,y,z,w)T$.

\begin{thm}\label{3.5} We have the following. 
\begin{enumerate}
\item[$(1)$] $R$ is a non-Gorenstein almost Gorenstein local integral domain with $ r(R) = 2$. 
\item[$(2)$] $\calX_R = \{(x^\ell, y, z, w)R \mid 1 \le \ell \le n\}$. 
\item[$(3)$] $R$ is a normal ring if and only if $n = 1$.
\end{enumerate}
\end{thm}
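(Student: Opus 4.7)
The proof of Theorem~\ref{3.5} splits naturally into the three assertions, and my plan is to exploit the Hilbert--Burch resolution of $T$ together with the parametrization $\phi: T\to k[s,t]$ sending $(x,y,z,w)\mapsto(s,s^nt,s^nt^2,s^nt^3)$, which is readily checked to annihilate every $2\times 2$ minor of $\mathbb{M}$. For (1), since $I_2(\mathbb{M})$ is perfect of grade two, Hilbert--Burch gives that $T$ is Cohen--Macaulay of dimension two; since $T/(x)T\cong k[y,z,w]/(y^2,yz,z^2-yw)$ is one-dimensional (with $y,z$ nilpotent and the quotient by $(y,z)$ a polynomial ring in $w$), $x$ is a non-zero-divisor in the Cohen--Macaulay ring $T$, and the relations yield $T[1/x]\cong k[x,x^{-1},y]$ via $z=y^2/x^n$ and $w=y^3/x^{2n}$, so $T$ embeds in a domain and is itself a domain. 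Dualizing the Hilbert--Burch resolution of $R$ gives $K_R\cong\Coker(\mathbb{M}: R^3\to R^2)$, so $r(R)=\nu(K_R)=2$. Writing $u,v$ for the two generators of $K_R$ subject to $x^nu+yv=yu+zv=zu+wv=0$, one checks directly that $\ann_R\bar v=(y,z,w)R$ in $K_R/Ru$, so $K_R/Ru\cong R/(y,z,w)R\cong k[x]_{(x)}$, which is a DVR hence trivially Ulrich; torsion-freeness of $K_R$ over the (generically Gorenstein) domain $R$ makes multiplication by $u$ an injection $R\hookrightarrow K_R$ with this cokernel, exhibiting $R$ as almost Gorenstein.

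For (2), let $\fkp:=(y,z,w)R$; the previous paragraph shows $\fkp$ is a prime of height one with $R/\fkp$ a DVR and $\m=\fkp+(x)R$. Before applying Theorem~\ref{3.4}(3) I would verify the generic-Gorenstein hypothesis: if $x\notin\fkq$, then $R_\fkq$ is a localization of the regular ring $T[1/x]=k[x,x^{-1},y]$; if $x\in\fkq$, then $\fkq=(x,y,z)R$ is the unique minimal prime over $(x)R$, and $w$ is a unit in $R_\fkq$, so the column operations $C_1\mapsto C_1-(y/w)C_3$ and $C_2\mapsto C_2-(z/w)C_3$ use exactly the three defining relations $yz=x^nw$, $z^2=yw$, $x^nw=yz$ to kill the first two columns of $\mathbb{M}$ in $R_\fkq$, forcing $K_{R_\fkq}\cong R_\fkq$. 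Theorem~\ref{3.4}(3) then yields $\fka=\fkp+(x^m)R$ for some $m\geq 1$, $\calX_R\subseteq\{\fkp+(x^\ell)R:1\leq\ell\leq m\}$, and four-generation. The main computational step is verifying that each $I_\ell:=(x^\ell,y,z,w)R$ with $1\leq\ell\leq n$ is Ulrich with minimal reduction $Q:=(x^\ell,w)R$: the Artinianness of $R/Q\cong k[x,y,z]/(x^\ell,y^2,yz,z^2)$ (the condition $\ell\leq n$ makes $x^n$ vanish modulo $Q$) makes $Q$ a parameter ideal, the identity $I_\ell^2=QI_\ell$ reduces to $y^2,yz,z^2\in QI_\ell$ and follows from $y^2=x^nz\in(x^\ell z)$, $yz=x^nw\in(x^\ell w)$ and $z^2=yw$, and the isomorphism $I_\ell/Q\cong(R/I_\ell)^{\oplus 2}$ drops out by verifying that the images of $y$ and $z$ in $R/Q$ each have annihilator exactly $I_\ell$ and together span $I_\ell/Q$ as a direct sum. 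Finally, the same Hilbert--Burch presentation of $K_R$ gives $(Rv:_R K_R)=(x^n,y,z)R$, whence $\fka\supseteq(x^n,y,z,w)R=I_n$; combined with $\fka\subseteq I_n$ from Theorem~\ref{3.4}(1) applied to the (just verified) Ulrich ideal $I_n$, this pins down $m=n$ and $\calX_R=\{I_\ell:1\leq\ell\leq n\}$.

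For (3), the image $\Im\phi=k[s,s^nt,s^nt^2,s^nt^3]\subseteq k[s,t]$ is the affine semigroup ring on $\Sigma:=\{(i,0):i\geq 0\}\cup\{(i,j):j\geq 1,\ i\geq n\lceil j/3\rceil\}$; a direct check shows $\Sigma$ is saturated in $\Z^2$ precisely when $n=1$, giving normality in that case (recovering the classical fact that $R$ is then the cone over the twisted cubic). When $n\geq 2$, the element $y/x\in Q(R)$ satisfies $(y/x)^2=y^2/x^2=x^{n-2}z\in R$ and is therefore integral over $R$, yet $\phi(y/x)=s^{n-1}t$ lies outside $\Im\phi$, because the monomial $s^it$ belongs to the image only for $i\geq n$; hence $R$ is not normal. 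The main obstacle is the direct Ulrich verification in step~(2), where one must coordinate the three quadratic relations of $T$, the choice of reduction $Q$, and the direct-sum decomposition of $I_\ell/Q$ simultaneously; by contrast the localization check for the generic-Gorenstein hypothesis is forced by column operations provided by the defining relations themselves, and the identification of $\fka$ is squeezed between explicit upper and lower bounds.
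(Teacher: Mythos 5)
Your proposal is correct and follows essentially the same route as the paper: present $\rmK_R$ as $\Coker(\mathbb M)$, read off $r(R)=2$ and the exact sequence $0\to R\to\rmK_R\to R/(y,z,w)\to 0$ with DVR cokernel, compute the two colon ideals to get $\fka\supseteq(x^n,y,z,w)$, invoke Theorem \ref{3.4}(1) to force every Ulrich ideal to be some $(x^\ell,y,z,w)$ with $1\le\ell\le n$, and verify directly that each of these is Ulrich. The only deviations are cosmetic: you prove $T$ is a domain by inverting $x$ rather than via the isomorphism $T\cong k[s,s^nt,s^nt^2,s^nt^3]$ (which you then use anyway for normality), and the detour through Theorem \ref{3.4}(3) (with its generic-Gorenstein check) is logically redundant once you have $\fka\supseteq I_n$ and Theorem \ref{3.4}(1).
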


\begin{proof} We regard $S$ as a $\Bbb Z$-graded ring so that $\deg X = 1, \deg Y = n+1, \deg Z = n+2$, and $\deg W = n+3$. Then $T \cong k[s,s^nt,s^nt^2,s^nt^3]$ where $s,t$ are indeterminates over $k$. Hence $T$ is an integral domain, and $T$ is a normal ring if and only if $n = 1$. The graded canonical module $\rmK_T$ of $T$ has the presentation of the form
\begin{equation}
  S(-(n+3))
  \oplus
  S(-(n+4))
  \oplus
  S(-(n+5))
\xrightarrow{\left(\begin{smallmatrix}X^n & Y & Z\\Y & Z & W\end{smallmatrix}\right)} 
  S(-3)\oplus S(-2)
\xrightarrow{\varepsilon} K_R\rightarrow 0. 
\end{equation} Since $$\mathrm{K}_T/T{\cdot}\varepsilon(\mathbf{e}_1) \cong [S/(Y,Z,W)](-2)\ \ \text{and} \ \ \mathrm{K}_T/T{\cdot}\varepsilon(\mathbf{e}_2) \cong [S/(X^n,Y,Z)](-3)$$ where $\mathbf{e}_1, \mathbf{e}_2$ is the standard basis of $S(-3)\oplus S(-2)$, $R$ is an almost Gorenstein local ring with $\mathrm{r} (R) = 2$. By Theorem  \ref{3.4} (1) every $I \in \calX_R$  contains $(x^n,y,z,w)R$, so that  $I = (x^\ell, y,z,w)R$ with $1 \le \ell \le n$. It is straightforward to check that $(x^\ell, y,z,w)R$ is actually an Ulrich ideal of $R$ for every $1 \le \ell \le n$.
\end{proof}

Let $k$ be a field and let $T =  k[X^n,X^{n-1}Y, \ldots, XY^{n-1},Y^n]$ be the Veronesean subring of the polynomial ring $S=k[X,Y]$ of degree $n \ge 3$. Let $R = T_M$ and $\m = MT_M$, where $M$ denotes the graded maximal ideal of $T$. Then $R$ is a non-Gorenstein almost Gorenstein normal local ring (\cite[Example 10.8]{GTT}) and we have the following. Let us note a brief proof in our context.

\begin{ex}[cf. {\cite[Example 7.3]{GOTWY2}}]\label{3.6}
$\calX_R = \{\m\}$.
\end{ex}

\begin{proof} Since $M^2=(X^n,Y^n)M$, we have $\m \in \calX_R$. Because $$\mathrm{K}_R = (X^{n-1}Y, X^{n-2}Y^2, \ldots, X^2Y^{n-2},XY^{n-1})R,$$ it is direct to check that 
$M \subseteq \sum_{i = 1}^{n-1}\left[R{\cdot}X^iY^{n-i}:_R \mathrm{K}_R\right].$ Hence $\calX_R = \{\m\}$ by Theorem \ref{3.4} (1).
\end{proof}

Let $S = k[[X,Y,Z]]$ be the formal power series ring over an infinite field $k$. We choose an element $f \in (X,Y,Z)^2 \setminus (X,Y, Z)^3$ and set $R = S/(f)$. Then $R$ is a two-dimensional Cohen--Macaulay local ring of multiplicity $2$. Let $\m$ denote the maximal ideal of $R$ and consider the Rees algebra $\calR = \calR(\m^\ell)$ of $\m^\ell$ with $\ell \ge 1$. Hence 
$$\calR=R[\m^\ell {\cdot} t] \subseteq R[t]$$ where $t$ is an indeterminate over $R$. Let $\fkM = \m + \calR_+$ and  set $A = \calR_\fkM$, $\n = \fkM \calR_\fkM$. Then $A$ is not a Gorenstein ring but almost Gorenstein (\cite[Example 2.4]{GTY1}). We furthermore have the following.

\begin{thm}\label{3.8}
$\calX_A=\{\n\}$.
\end{thm}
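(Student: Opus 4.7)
The plan is to apply Theorem \ref{3.4}(1): it suffices to show that the expected core $\fka$ of $A$ equals $\n$, because then any non-parameter Ulrich ideal $I$ must satisfy $\n = \fka \subseteq I \subseteq \n$, forcing $I = \n$; combined with $\n \in \calX_A$ this yields $\calX_A = \{\n\}$.

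First I would check $\n \in \calX_A$. Since $R$ is a two-dimensional Gorenstein hypersurface of minimal multiplicity $2$ and $k$ is infinite, pick a minimal reduction $(a_1, a_2)$ of $\m$ with $\m^2 = (a_1, a_2)\m$. One can then produce an element $\omega \in \m^\ell$ for which $Q = (a_1, a_2, \omega t)A$ is a minimal reduction of $\n$ satisfying $\n^2 = Q\n$ with $\n/Q$ a free $A/\n$-module; this uses the minimal multiplicity of $R$ together with the graded structure of $\calR = R[\m^\ell t]$.

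The main step is the computation of $\fka$. Using the standard description of the canonical module of a Cohen--Macaulay Rees algebra over the Gorenstein ring $R$---$\rmK_\calR$ admits a graded presentation whose $n$-th component is a specific power of $\m$ times $t^n$---together with the explicit almost Gorenstein sequence $0 \to A \to \rmK_A \to C \to 0$ furnished by \cite[Example 2.4]{GTY1}, I would exhibit a finite family of nonzerodivisors $\xi_1, \dots, \xi_s \in \rmK_A$, constructed by multiplying the basic embedding $A \hookrightarrow \rmK_A$ by generators of $\m$ and by suitable powers of $t$, so that each generator of $\n$ lies in some colon ideal $[A\xi_j :_A \rmK_A]$. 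This yields $\n \subseteq \fka$; since $\fka \subseteq \n$ holds automatically for the non-Gorenstein ring $A$ (as $\rmK_A$ is non-cyclic, every colon $A\xi :_A \rmK_A$ is proper), we conclude $\fka = \n$.

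The main obstacle is this final step: the graded Rees algebra structure is substantially more intricate than the monomial-style situations of Theorem \ref{3.5} and Example \ref{3.6}, and tracking the degree shift in $\rmK_A$ while simultaneously ensuring that the chosen elements $\xi_j$ are regular and that their colons exhaust every generator of $\n$ requires careful analysis of the interaction between $\m$ and the variable $t$. Once the explicit form of $\rmK_A$ is established, the colon calculations themselves are routine.
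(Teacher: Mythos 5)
There is a genuine gap at the heart of your plan. You propose to prove $\fka=\n$ for the expected core of $A$ and then invoke Theorem \ref{3.4}(1). Leaving aside that Theorem \ref{3.4}(1) is stated for almost Gorenstein local \emph{domains} (while $R=S/(f)$, hence $\calR$, need not be a domain, so one should really invoke Corollary \ref{2.5} directly, as the paper does), the real problem is that the colon-ideal computation does not produce all of $\n$. Writing $I=\m^\ell$ and choosing generators $x_1,\dots,x_q$ of $I$ with each pair a reduction, one has $\rmK_\calR(1)\cong I\calR$ and $x_i\calR:_\calR I\calR=\sum_{n\ge0}(x_iI^{n-1})t^n$; summing over $i$ yields only $I+\calR_+=\m^\ell+\calR_+$ inside $K=J\cap\calR$. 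This equals $\fkM$ only when $\ell=1$. For $\ell\ge2$ the degree-zero part you obtain is $\m^\ell$, not $\m$, and your proposal offers no mechanism for promoting $\m^\ell$ to $\m$: multiplying the embedding $A\hookrightarrow\rmK_A$ by further elements of $\m$ or powers of $t$ shrinks the image and does not obviously enlarge the colons in degree zero. So the claim $\fka=\n$ is unsubstantiated and is not what the paper establishes.

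The paper closes exactly this gap with an extra argument that your proposal is missing: from $I+\calR_+\subseteq K$ one gets $K=\fkb+\calR_+$ for some $\m$-primary ideal $\fkb\supseteq\m^\ell$ of $R$, and then the Ulrich condition is used again in a different way --- $K/K^2$ is $\calR/K$-free, so $[K/K^2]_1=I/\fkb I$ must be $R/\fkb$-free. An explicit first syzygy of a suitable minimal generating set of $\m^\ell$ (e.g.\ the Koszul relation $\bigl(b,-a,0,\dots,0\bigr)$ among $a^\ell,a^{\ell-1}b,\dots$), which is a minimal generator of the syzygy module, then forces $a,b,c\in\fkb$, whence $\fkb=\m$ and $K=\fkM$, $J=\n$. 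Without this freeness/syzygy step your argument cannot conclude; you should either add it or restrict to $\ell=1$, where the colon computation alone suffices.
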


\begin{proof} We have $\m^2 = (a,b)\m$ with $a, b \in \m$. Let $Q = (a, b-a^\ell t, b^\ell t)$. Then $Q \subseteq \fkM$ and $\fkM^2 = Q\fkM$, so that  $\n \in \calX_A$. Conversely, let $J \in \calX_A$ and set $K = J \cap \calR$. We put $I = \m^\ell$ and choose elements $x_1, x_2, \ldots, x_q \in I$~($q := \nu (I) = 2\ell +1$) so that the ideal $(x_i, x_j)$ of $R$  is a reduction of $I$ for each pair $(i,j)$ with  $1 \le i  < j \le q$. (Hence $x_i, x_j$ form a super-regular sequence with respect to $I$, because $\gr_I(R)=\bigoplus_{n \ge 0}I^n/I^{n+1}$ is a Cohen--Macaulay ring.) Then   $$\sum_{i=1}^{q}\left[x_i\calR:_\calR I\calR\right] \subseteq K$$ by Corollary \ref{2.5},  since $\rmK_\calR(1) \cong I\calR$ (\cite[Proposition 2.1]{GTY1}). We have for each $1 \le i \le q$ that $$x_i\calR:_\calR I\calR = \sum_{n \ge 0}\left(x_iI^{n-1}\right)t^n,$$ because each pair $x_i, x_j~(i \ne j)$ forms a super-regular sequence with respect to $I$. Consequently, $$(x_i, x_it)\calR \subseteq K~ \text{for~all}~  1 \le i \le q.$$ Thus  $I + \calR_+ \subseteq K$ and hence $K = \fkb + \calR_+$ for some $\m$-primary ideal $\fkb$ of $R$. Notice that $[K/K^2]_1=I/\fkb I$ is $R/\fkb$-free, since $K/K^2$ is $\calR/K$- free and $\calR/K =[\calR/K]_0 = R/\fkb$. We then have $\fkb = \m$. In fact, let us write $\m = (a, b, c)$ so that each two of $a, b, c$ generate a reduction of $\m$. Set $\q = (a,b)$. Then since $\q$ is a minimal reduction of $\m$, the elements  $\{a^{\ell - i}b^i\}_{0 \le i \le \ell}$  form a part of a minimal basis, say  $\{a^{\ell - i}b^i\}_{0 \le i \le \ell}$ and $\{c_i\}_{1 \le i \le \ell}$, of $I= \m^\ell$. Let $\{\mathbf{e}_i\}_{0 \le i \le 2\ell}$ be the standard basis of $R^{\oplus (2\ell + 1)}$ and let
$$\varphi : R^{\oplus(2\ell+1)}=\bigoplus_{i=0}^{2\ell}R\mathbf{e}_i \to \m^{\ell}$$ be the $R$-linear map defined by $\varphi(\mathbf{e}_i) = a^{\ell-i}b^i$ for $0\le i \le \ell$ and $\varphi(\mathbf{e}_{i+\ell})  = c_{i}$ for $1 \le i \le \ell$.  Then setting  $Z = \Ker \varphi$, we get $\xi = \left[\begin{smallmatrix}
b\\
-a\\
0\\
\vdots\\
0
\end{smallmatrix}\right] \in Z$ and $\xi \not\in \m Z$ because $\xi \not\in \m^2{\cdot}R^{\oplus (2\ell + 1)}$. Hence $b, -a \in  \fkb$, because $I/\fkb I$ is $R/\fkb$-free. We similarly have $c, -a \in \fkb$. Hence  $\fkb = \m$, so that $K= \m + \calR_+ = \fkM$. Thus $J = \n$.
\end{proof}

\section{A method of constructing Ulrich ideals with different number of generators}\label{method}

This section purposes to show that in general the numbers of generators of Ulrich ideals are not necessarily constant. To begin with, we note the following.

\begin{lem}\label{4.1}
Let $\varphi : (A,\fkn) \to (R,\m)$ be a flat local homomorphism of Cohen--Macaulay local rings of the same dimension. Let $\q$ be a parameter ideal of $A$ and assume that $\n^2 =\q \n$. Then $J = \fkn R$ is an Ulrich ideal of $R$.
\end{lem}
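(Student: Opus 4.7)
My plan is to verify directly the four properties defining an Ulrich ideal for $J = \n R$, taking $Q := \q R$ as the candidate parameter reduction. Namely, I want to show (i) $Q$ is a parameter ideal of $R$, (ii) $J$ is $\m$-primary, (iii) $J^2 = QJ$, and (iv) $J/J^2$ is $R/J$-free. All four will fall out of flatness of $\varphi$ combined with the assumption $\dim A = \dim R$.

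First I will use the Cohen--Macaulayness of $A$ to write $\q = (a_1, \ldots, a_d)$ with $a_1, \ldots, a_d$ an $A$-regular sequence, and then invoke the standard fact that regular sequences persist under flat base change to conclude that $Q = \q R$ is generated by an $R$-regular sequence of length $d = \dim R$, hence a parameter ideal of $R$. For the $\m$-primariness of $J$, I will appeal to the fiber dimension formula $\dim R/\n R = \dim R - \dim A = 0$ for a flat local homomorphism of Cohen--Macaulay local rings of the same dimension; so $R/J$ is Artinian local and $J$ is $\m$-primary. The reduction equation $J^2 = QJ$ drops out by extending $\n^2 = \q \n$ along $\varphi$ (once this holds with $Q \subseteq J$, $Q$ is automatically a reduction of $J$ with reduction number one).

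The freeness of $J/J^2$ over $R/J$ is the last ingredient. The key observation is that $\n/\n^2$ is automatically a free $A/\n$-module because $A/\n$ is a field. Since $R$ is flat over $A$, the natural maps $\n \otimes_A R \to \n R$ and $\n^2 \otimes_A R \to \n^2 R$ are isomorphisms, so base change yields
$$
J/J^2 \;\cong\; (\n/\n^2) \otimes_A R \;\cong\; (\n/\n^2) \otimes_{A/\n} R/J,
$$
which is a free $R/J$-module of rank $\nu_A(\n)$.

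I do not anticipate a genuine obstacle: every step is a standard consequence of flatness and equidimensionality. The only point requiring minor care is to perform the base-change identifications cleanly, which flatness supplies for free.
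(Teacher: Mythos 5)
Your proof is correct and follows essentially the same route as the paper's: both arguments rest on flat base change carrying a $k$-vector space (there $\n/\q$, here $\n/\n^2$) to a free $R/J$-module, together with extending $\n^2=\q\n$ to $J^2=QJ$. The only cosmetic difference is that you verify freeness of $J/J^2$ directly from the definition (and also spell out that $Q=\q R$ is a parameter ideal and $J$ is $\m$-primary), whereas the paper computes $J/Q\cong R\otimes_A(A/\n)^{\oplus t}$ and invokes the standard equivalence between freeness of $I/Q$ and of $I/I^2$ when $I^2=QI$.
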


\begin{proof} We set $Q = \q R$. Then $$J/Q \cong R \otimes_A(\fkn/\q) \cong R \otimes_A(A/\fkn)^{\oplus t}$$  where $ t = v(A) - \dim A \ge 0$. Hence $J$ is an Ulrich ideal of $R$, as $J^2 = QJ$.
\end{proof}

When $\dim R = 0$ and $R$ contains a field, every Ulrich ideal of $R$ is obtained as in Lemma \ref{4.1}.

\begin{prop}\label{prop2} 
Let $(R,\fkm)$ be an Artinian local ring which contains a coefficient filed $k$. Let $I=(x_1, x_2, \ldots, x_n)~(n = \nu(I))$ be an Ulrich ideal of $R$. We set $A = k[x_1, x_2, \ldots, x_n] \subseteq R$ and $\fkn = (x_1, x_2, \ldots, x_n)A$. Then $\n$ is the maximal ideal of $A$, $I = \fkn R$,  and $R$ is a finitely generated free $A$-module.
\end{prop}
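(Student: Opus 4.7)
My plan is to exploit that in dimension zero the only parameter ideal is $(0)$, which is automatically a reduction of the nilpotent $\fkm$-primary ideal $I$, so the Ulrich conditions collapse to $I^2=0$ together with $I=I/I^2$ being $R/I$-free. Because $\nu(I)=n$ coincides with this free rank (by Nakayama, since a generating set of cardinality equal to the rank of a free module over a local ring is a basis), the classes $\bar x_1,\ldots,\bar x_n$ form a free basis of $I$ over $R/I$; equivalently, any relation $\sum x_i r_i=0$ in $R$ forces every $r_i$ into $I$.

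First I would dispatch (1) and (2). The vanishing $x_i x_j=0$ means $A$ is $k$-spanned by $1,x_1,\ldots,x_n$, so it suffices to check these are $k$-linearly independent in $R$. Given a relation $\alpha_0+\sum_i\alpha_i x_i=0$ with $\alpha_j\in k$, reducing modulo $I$ and using that the composition $k\hookrightarrow R\twoheadrightarrow R/\fkm$ is an isomorphism (so $k\cap I=0$) gives $\alpha_0=0$; the remaining relation then forces each $\alpha_i\in k\cap I=0$ by the freeness of $I$ on $\bar x_1,\ldots,\bar x_n$. Hence $A=k\oplus\bigoplus_i kx_i$, the ideal $\fkn=\sum_i kx_i$ is square-zero with $A/\fkn\cong k$, so $(A,\fkn)$ is Artinian local with maximal ideal $\fkn$, and $I=(x_1,\ldots,x_n)R=\fkn R$ is immediate.

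For (3), note that $R$ is finitely generated over $A$ because it is a finite-dimensional $k$-algebra. Over the local ring $A$, finite generation reduces flatness to the vanishing $\Tor_1^A(R,k)=0$, equivalently to injectivity of the multiplication map $\mu\colon\fkn\otimes_A R\to R$. Since $\fkn^2=0$ the ideal $\fkn$ is an $A/\fkn=k$-vector space of dimension $n$, giving an identification $\fkn\otimes_A R\cong(R/I)^n$ with basis $\{x_i\otimes 1\}$; under this identification $\mu$ becomes the map $(\bar r_1,\ldots,\bar r_n)\mapsto\sum_i x_i r_i$, and its injectivity is precisely the freeness of $I$ over $R/I$ on $\bar x_1,\ldots,\bar x_n$ recalled at the start. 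A finitely generated flat module over a local ring is free, which finishes (3). The only real subtlety is translating the Ulrich freeness of $I$ into the flatness of $R$ over $A$ via this $\fkn\otimes_A R \cong (R/I)^n$ identification; the rest is routine linear algebra over the coefficient field $k$.
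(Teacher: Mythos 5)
Your proof is correct, and for the main assertion (freeness of $R$ over $A$) it takes a recognizably different route from the paper's. The paper passes over claims (1) and (2) essentially without comment and proves freeness by a length count: setting $r=\dim_kR/I=\nu_A(R)$, it takes a minimal surjection $\varphi\colon A^{\oplus r}\to R$, restricts it to a surjection $\fkn^{\oplus r}\to I$, and observes that both sides have $k$-dimension $nr$ (using $\dim_k\fkn=n$ and $I\cong(R/I)^{\oplus n}$), so that this restriction, and hence $\varphi$ itself, is an isomorphism. You instead invoke the local criterion of flatness, identifying $\Tor_1^A(k,R)$ with the kernel of the multiplication map $\fkn\otimes_AR\to R$ and recognizing that map, via $\fkn\otimes_AR\cong(R/I)^{\oplus n}$, as exactly the presentation $(R/I)^{\oplus n}\to I$ that the Ulrich hypothesis makes an isomorphism. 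The two arguments consume the same input, namely that $\bar x_1,\dots,\bar x_n$ is a free $R/I$-basis of $I$ and that $\dim_k\fkn=n$, but the paper's version is a bare-hands dimension count, whereas yours makes the flatness of $R$ over $A$ the explicit intermediate statement and needs no counting; this is slightly less elementary, since it quotes the fact that $\Tor_1^A(R,k)=0$ forces a finitely generated module over the Noetherian local ring $A$ to be flat and hence free, but it isolates the conceptual content more cleanly and generalizes more readily. Your explicit verification of (1) and (2) --- the linear independence of $1,x_1,\dots,x_n$ over $k$ via $k\cap I=0$ and the freeness of $I$ --- is also a welcome supplement, since the paper's proof silently assumes $I=\fkn R$ and $\nu_A(\fkn)=n$.
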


\begin{proof} Let $r = \dim_kR/I$. Hence $r = \nu_A(R)$, as $I = \fkn R$. Let $\varphi : A^{\oplus r} \to R$ be an epimorphism of $A$-modules. Then $(A/\n) \otimes_A\varphi : (A/\n)^{\oplus r} \to R/I$ is an isomorphism, so that the induced  epimorphism $\phi : \fkn^{\oplus r} \to I$ must be an isomorphism, because $\dim_k\fkn^{\oplus r} = nr$ (remember that $\nu_A(\n) = n$) and $\dim_kI = \dim_k(R/I)^{\oplus n} = nr$. Hence $\varphi : A^{\oplus r} \to R$ is an isomorphism.
\end{proof}

When $\dim R > 0$, the situation is more complicated, as we see in the following.

\begin{ex} Let $V = k[[t]]$ be the formal power series ring over a field $k$ and set $R = k[[t^4,t^5]]$ in $V$. Then $(t^4,t^{10}), (t^8, t^{10})$ are Ulrich ideals of $R$. We set $A = k[[t^4, t^{10}]]$ and $B = k[[t^8, t^{10}, t^{12}, t^{14}]]$. Let $\m_A$ and $\m_B$ be the maximal ideals of $A$ and $B$, respectively. Then $A$ and $B$ are of minimal multiplicity with $(t^4, t^{10}) = \m_AR$ and $(t^8, t^{10}) = \m_BR$. Notice that $R \cong A^{\oplus 2}$ as an $A$-module, while $R$ is not a free $B$-module. We actually have $\rank_BR = 2$ and $\nu_B(R) = 4$. 
\end{ex}

Let $(A,\n)$ be a Noetherian local ring and $S = A[X_1, X_2, \ldots, X_\ell]$~($\ell > 0$) the polynomial ring. We choose elements $a_1, a_2, \ldots, a_\ell \in A$ and set $$\fkc = (X_i^2 -a_i \mid 1 \le i \le \ell) + (X_iX_j \mid 1 \le i,j \le \ell~\text{such~that}~i \ne j).$$ We put $R = S/\fkc$. 
Then $R$ is a finitely generated free $A$-module of $\rank_AR = \ell + 1$. We get $R = A{\cdot}1 + \sum_{i=1}^\ell A{\cdot}x_i$, where $x_i$ denotes the image of $X_i$ in $R$.  With this notation we readily get the following. We note a brief proof.

\begin{lem}\label{4.2}
Suppose that $a_1, a_2, \ldots, a_\ell \in \n$. Then $R$ is a local ring with maximal ideal $\m = \n R + (x_i \mid 1 \le i \le \ell)$.
\end{lem}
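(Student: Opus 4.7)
The plan is to verify the two defining properties of a local ring directly: that the ideal $\m = \n R + (x_1,\ldots,x_\ell)$ is maximal, and that every element in its complement is a unit. The freeness $R = A\cdot 1 \oplus \bigoplus_{i=1}^\ell A\cdot x_i$ makes this bookkeeping transparent, since an arbitrary element has a unique expression $u = c_0 + \sum_{i=1}^\ell c_i x_i$ with $c_i \in A$, and $u \in \m$ precisely when $c_0 \in \n$.

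To see that $\m$ is maximal, I would compute the quotient $R/\m$. Killing $\n$ and the $x_i$ simultaneously trivializes every defining relation of $R$ (note that $x_i^2 - a_i$ reduces to $0$ because $a_i \in \n$, and $x_ix_j = 0$ is already imposed), so $R/\m \cong A/\n$, which is a field.

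For uniqueness, suppose $u = c_0 + \sum c_i x_i \notin \m$, so that $c_0 \in A \setminus \n$ is a unit of $A$. The key step is to multiply $u$ by its ``conjugate'' $v = c_0 - \sum c_i x_i$: using the relations $x_i^2 = a_i$ and $x_ix_j = 0$ for $i \ne j$, every cross term and every off-diagonal contribution vanishes, leaving
$$
uv \;=\; c_0^2 - \sum_{i=1}^\ell c_i^2\, a_i.
$$
Since each $a_i \in \n$ and $c_0^2 \notin \n$, the right-hand side lies in $A \setminus \n$, hence is a unit of $A$ and therefore of $R$. Thus $u$ is a unit, which shows that $\m$ is the unique maximal ideal.

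I do not foresee a serious obstacle: the argument is essentially the one short computation above. The only minor point worth noticing is the ``sign-flip trick'' $v = c_0 - \sum c_i x_i$, which succeeds precisely because the relations defining $\fkc$ force the augmentation ideal $(x_1,\ldots,x_\ell)R$ to square into $\n R$.
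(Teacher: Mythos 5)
Your proof is correct, and it takes a genuinely different route from the paper's in the uniqueness step. Both arguments begin by observing that $\fkc\subseteq\n S+(X_1,\dots,X_\ell)S$, so that $R/\m\cong A/\n$ is a field and $\m$ is maximal. The paper then argues top-down: for any $M\in\operatorname{Max}R$ one has $M\cap A=\n$ (as $R$ is module-finite over the local ring $A$), hence $a_i\in M$, hence $x_i\in M$ because $x_i^2=a_i$ and $M$ is prime; thus $\m\subseteq M$ and maximality of $\m$ forces $M=\m$. You instead argue bottom-up, showing every $u=c_0+\sum_i c_ix_i$ with $c_0\notin\n$ is a unit via the conjugate computation $uv=c_0^2-\sum_i c_i^2a_i\in A\setminus\n$, which is valid since the relations kill all cross terms and send $x_i^2$ to $a_i\in\n$. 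Your approach buys an explicit inverse and avoids invoking lying-over for the finite extension $A\subseteq R$, at the cost of leaning on the freeness of $R$ over the basis $1,x_1,\dots,x_\ell$ to make the expression of $u$ (and the criterion $u\in\m\Leftrightarrow c_0\in\n$) unambiguous; the paper's version only needs that these elements generate $R$ as an $A$-module and that maximal ideals are prime. Both are complete proofs of the lemma.
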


\begin{proof}
Let $M \in \operatorname{Max} R$ and $1 \le i \le \ell$. Then $a_i \in M$ since $a_i \in \n = M\cap A$, while $x_i \in M$ since $x_i^2 = a_i$. Thus $\n R + (x_i \mid 1 \le i \le \ell) \subseteq M$. Hence we get the result, because $\n R + (x_i \mid 1 \le i \le \ell) \in \operatorname{Max} R$.
\end{proof}

The following Theorem \ref{4.3} and Lemma  \ref{4.1} give a simple method of constructing of Ulrich ideals with different numbers of generators. In fact, suppose that $A$ has maximal embedding dimension and let $\q$ be a parameter ideal of $A$ such that $\n^2 =\q \n$. Then the ideals $I$ in Theorem  \ref{4.3} and $J$ in Lemma  \ref{4.1} are both Ulrich ideals of $R$  but the numbers of generators are different, if one takes the integer $\ell \ge 1$ so that $\ell \ne \e_\n^0(A)-1$.

\begin{thm}\label{4.3} 
Let $\q$ be a parameter ideal of $A$ and assume that 
\begin{enumerate}[$(1)$]
\item $A$ is a Cohen--Macaulay ring of dimension $d$ and 
\item $a_i \in \q^2$ for all $1 \le i \le \ell$. 
\end{enumerate}
Let  $I = \q R + (x_i \mid 1 \le  i \le \ell)$. Then $I$  is an Ulrich ideal of $R$ with $\nu (I) = d + \ell$.
\end{thm}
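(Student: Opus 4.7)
The plan is to exploit the explicit $A$-module decomposition $R = A \oplus \bigoplus_{i=1}^\ell Ax_i$ and compute $I$ and $I^2$ coordinate-wise. First I would set $Q = \q R$ and observe that since $A \to R$ is flat (indeed free) of relative dimension zero, any regular system of parameters of $A$ generating $\q$ remains a regular sequence in $R$; hence $R$ is Cohen--Macaulay of dimension $d$ and $Q$ is a parameter ideal of $R$, which will be the candidate reduction of $I$. In particular $I \supseteq Q$ is $\m$-primary.

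Using the relations $x_i^2 = a_i$ and $x_ix_j = 0$ for $i \ne j$, one computes $Rx_i = Aa_i + Ax_i$, so
\[ (x_1,\ldots,x_\ell)R = (a_1,\ldots,a_\ell) \oplus \bigoplus_{i=1}^\ell Ax_i \]
inside the free decomposition of $R$. Since $a_i \in \q^2 \subseteq \q$, the $A$-part is absorbed and one gets $I = \q \oplus \bigoplus_{i=1}^\ell Ax_i$, whence $R/I \cong A/\q$. For the Ulrich identity $I^2 = QI$, the inclusion $\supseteq$ is automatic from $Q\subseteq I$. For $\subseteq$, expand $I^2 = Q^2 + Q(x_1,\ldots,x_\ell) + (x_1,\ldots,x_\ell)^2$; the only delicate term is $(x_1,\ldots,x_\ell)^2 = (a_1,\ldots,a_\ell)R$, which by the hypothesis $a_i \in \q^2$ lies in $\q^2 R = Q^2 \subseteq QI$. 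The same bookkeeping yields $I^2 = \q^2 \oplus \bigoplus_i \q x_i$.

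Taking quotients gives
\[ I/I^2 \cong (\q/\q^2) \oplus \bigoplus_{i=1}^\ell (A/\q)x_i, \]
and since $A$ is Cohen--Macaulay with $\q$ a parameter ideal we have $\q/\q^2 \cong (A/\q)^{\oplus d}$; thus $I/I^2 \cong (R/I)^{\oplus(d+\ell)}$ is free of rank $d+\ell$ over $R/I$. This simultaneously shows $I$ is a non-parameter $\m$-primary ideal which is Ulrich with reduction $Q$, and computes $\nu(I) = d+\ell$. The main nuisance is the bookkeeping in the direct-sum decomposition under multiplication: one must carefully track how the $a_i$ arising from $x_i^2$ interact with $\q$ in the $A$-component, and the hypothesis $a_i \in \q^2$ is precisely what makes this absorption compatible with the Ulrich condition.
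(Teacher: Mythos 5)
Your proof is correct, and it proves the same statement by a mildly different route than the paper. The verification of $I^2=QI$ is identical: both arguments reduce it to $(x_1,\ldots,x_\ell)^2R=(a_1,\ldots,a_\ell)R\subseteq\q^2R$. Where you differ is in establishing freeness of the relevant quotient. The paper works with $I/Q$: it exhibits the natural surjection $(R/I)^{\oplus\ell}\to I/Q$ and shows it is an isomorphism by a length count, using $\ell_A(R/Q)=(\ell+1)\,\ell_A(A/\q)$ (freeness of $R$ over $A$) together with $\ell_A(R/I)\le\ell_A(A/\q)$; it then concludes Ulrichness via the standard equivalence (from \cite[Lemma 2.3]{GOTWY1}) between $I/Q$ free and $I/I^2$ free when $I^2=QI$. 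You instead compute everything coordinate-wise in the decomposition $R=A\oplus\bigoplus_iAx_i$, obtaining $I=\q\oplus\bigoplus_iAx_i$ and $I^2=\q^2\oplus\bigoplus_i\q x_i$, hence $I/I^2\cong(\q/\q^2)\oplus(A/\q)^{\oplus\ell}\cong(R/I)^{\oplus(d+\ell)}$ directly, using that $\q/\q^2$ is $A/\q$-free of rank $d$ for a parameter ideal in a Cohen--Macaulay ring. Your version is more explicit and verifies the definition of Ulrich ideal as stated in the paper's Setup without appealing to the $I/Q$ versus $I/I^2$ equivalence, at the cost of the direct-sum bookkeeping; the paper's length count is shorter and sidesteps checking that the sums are direct. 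Both hinge on the same two facts: $R$ is $A$-free of rank $\ell+1$ and $a_i\in\q^2$.
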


\begin{proof}
We set $Q = \q R$, $\fka = (a_i \mid 1 \le i \le \ell)$, and $\fkb = (x_i \mid 1 \le i \le \ell)$. Then $Q$ is a parameter ideal of $R$ and $I = Q + \fkb$. Therefore $I^2 = QI$ since $\fkb^2 = \fka \subseteq \q^2$. We set $m = \ell_A(A/\q)$. Hence $\ell_A(R/I) \le m$, as $R/I$ is a homomorphic image of $A/\q$. We consider the epimorphism $$(R/I)^{\oplus \ell} \overset{\varphi}{\longrightarrow} I/Q \to 0$$
of $R$-modules defined by $\varphi(\mathbf{e}_i) = \overline{x_i}$ for each $1 \le i \le \ell$,  where $\{\mathbf{e}_i\}_{1 \le i \le \ell}$ denotes the standard basis of $(R/I)^{\oplus \ell}$ and $\overline{x_i}$ denotes the image of $x_i$ in $I/Q$. Then $\varphi$ is an isomorphism, since 
$$
\ell_A(I/Q)
=\ell_A(R/Q) - \ell_A(R/I)
\ge(\ell +1) m - m
=\ell m
\ge\ell_A((R/I)^{\oplus \ell}).
$$
Thus $I$ is an Ulrich ideal of $R$ with $\nu (I) = d + \ell$. 
\end{proof}

\begin{ex}\label{4.5}
Let $0 < a_1 < a_2 < \cdots < a_n$~($n \ge 3$) be integers such that $\operatorname{gcd}(a_1, a_2, \ldots, a_n) = 1$. Let $L =\left<a_1, a_2, \ldots, a_n\right>$ be the numerical semigroup generated by $a_1, \a_2, \ldots, a_n$. Then $c(L) = a_n - a_1 + 1$, where $c(L)$ denotes the conductor of $L$. We set $A = k[[t^{a_i} \mid 1 \le i \le n]]$ ($k$ a field)  and assume that $v(A) = \e^0_{\n}(A)= n$, where $\n$ denotes the maximal ideal of $A$ (hence $a_1 = n$). We choose an odd integer $b \in L$ so that $b \ge a_n + a_1 + 1$ and consider the semigroup ring $R = k[[\{t^{2a_i}\}_{1 \le i \le n}, t^b]]$ of the numerical semigroup $H=2L +\left<b\right>$. Let $\varphi: A \to R$ be the homomorphism of $k$-algebras such that $\varphi (t^{a_i}) = t^{2a_i}$ for each $1 \le i \le n$.  We set $f = t^b$. Then $f^2 = \varphi (f) \in \varphi (A)$ but $f \not\in \varphi (A)$, since $b \in L$ is odd. Therefore $R$ is a finitely generated free $A$-module with  $\rank_AR = 2$ and $R = A{\cdot}1 + A{\cdot}f$. Since $f \in t^{2a_1}A$ by the choice of the integer $b$, we get by Theorem \ref{4.3} and Lemma  \ref{4.1} that $I = (t^{2a_1}, t^b)$ and $J = (t^{2a_i} \mid 1 \le i \le n)$ are both Ulrich ideals of $R$ with $\nu (I) = 2$ and $\nu (J) = n > 2$. 
\end{ex}

\section*{Acknowledgments}

The authors are grateful to Naoyuki Matsuoka for his helpful discussions to find Example \ref{4.5}. The authors are grateful also to the referee. Corollary \ref{referee} is suggested by the referee.


\end{document}